\def\BibTeX{{\rm B\kern-.05em{\sc i\kern-.025em b}\kern-.08emT\kern-.1667em\lower.7ex\hbox{E}\kern-.125emX}}
 \newtheorem{defi}{Definition}
\newtheorem{thm}{Theorem}
\newtheorem{prop}{Proposition}
\newtheorem{lem}{Lemma}
\begin{document}

\fancyhead{}

\title{Hyperelliptic Integrals to Elliptic Integrals}


\author{Thierry Combot}
\affiliation{%
  \institution{IMB, Universit\'e de Bourgogne}
  \streetaddress{6 avenue Alain Savary}
  \city{Dijon}
  \country{France}}
\email{thierry.combot@u-bourgogne.fr}

%

%
\begin{abstract}
Consider a hyperelliptic integral $I=\int P/(Q\sqrt{S}) dx$, $P,Q,S\in\mathbb{K}[x]$, with $[\mathbb{K}:\mathbb{Q}]<\infty$. When $S$ is of degree $\leq 4$, such integral can be calculated in terms of elementary functions and elliptic integrals of three kinds $\mathcal{F},\mathcal{E},\Pi$. When $S$ is of higher degree, it is typically non elementary, but it is sometimes possible to obtain an expression of $I$ using also elliptic integrals when the Jacobian of $y^2=S(x)$ has elliptic factors. We present an algorithm searching for elliptic factors and a modular criterion for their existence. Then, we present an algorithm for computing an expression of $I$ using elliptic integrals, which always succeed in the completely decomposable Jacobian case.
\end{abstract}

%
%

\begin{CCSXML}
<ccs2012>
<concept>
<concept_id>10002950.10003714.10003732.10003735</concept_id>
<concept_desc>Mathematics of computing~Integral calculus</concept_desc>
<concept_significance>500</concept_significance>
</concept>
</ccs2012>
\end{CCSXML}

\ccsdesc[500]{Mathematics of computing~Integral calculus}

\keywords{Symbolic Integration, Jacobians, Elliptic factors}

%

%
\maketitle

\section{Introduction} 

Consider an integral of the form
$$I=\int \frac{P(x)}{Q(x)\sqrt{S(x)}} dx$$
where $P,Q,S\in\mathbb{K}[x]$, $S$ and $\mathbb{K}$ is a finite extension of $\mathbb{Q}$. We can assume that $S$ has no multiple roots, and up to homographic variable change that $S$ is of odd degree, and up to multiplication of $I$, that $S$ is monic.

When $\deg S=1$, the integral $I$ can be expressed respectively with an algebraic function and logs, and thus is elementary. Since works of Legendre \cite{labahn1997reduction}, we know that when $\deg S=3$, this is typically not possible, but to express such integrals it is enough to introduce $3$ new functions, which will note
$$\mathcal{F}(z\mid \kappa)=\! \int\! \frac{dz}{\sqrt{z(z-1)(z-\kappa)}},\;\mathcal{E}(z\mid \kappa)=\! \int\! \frac{z dz}{\sqrt{z(z-1)(z-\kappa)}},$$ 
$$\Pi(z,u\mid \kappa)=\! \int\! \frac{\sqrt{u(u-1)(u-\kappa)}dz}{(z-u)\sqrt{z(z-1)(z-\kappa)}}.$$
These are called respectively elliptic integrals of the first, second and third kind. The $\kappa$ parameter is related to the $j$ invariant of the elliptic curve $y^2=z(z-1)(z-\kappa)$ by the relation $j\kappa^2(\kappa-1)^2=256(\kappa^2-\kappa+1)^3$. Each elliptic integral has a special property, namely
\begin{itemize}
\item The first kind has no poles at all, and thus the integral is a smooth (but multivalued) function over an elliptic curve.
\item The second kind only has a pole at infinity and no residue, thus the integral has a single meromorphic pole at infinity and is a smooth (but multivalued) function elsewhere.
\item The third kind has two simple poles at $z=u$ (which defines two points on the curve $y^2=z(z-1)(z-\kappa)$), with residues $1,-1$. The integral behaves as a log near $z=u$, and is also multivalued along closed loops in the elliptic curve.
\end{itemize}
The first and second kind integral are never elementary, but the third kind can be elementary when $(u,\sqrt{u(u-1)(u-\kappa)})$ is a torsion point of the elliptic curve $y^2=z(z-1)(z-\kappa)$.

Elliptic integrals are special functions, not elementary, but still ``nice'' \cite{lawden2013elliptic}. Among other properties, their continuation in the complex domain is well understood, the inverse of the first kind gives the Weierstrass $\wp$ function, which is doubly periodic and meromorphic. When $\deg S\geq 5$, integrals $I$ are called hyperelliptic, and are much more mysterious and complicated. At the end of XIXth century and beginning of XXth, there was a lot of research \cite{mcdonald1906problem,gilespie1900reduction,roberts1871tract,goursat1885reduction} on possible formulas allowing to reduce such hyperelliptic integral to elliptic ones. To this day, CAS systems as Maple still do not use these works, as for example the following integral returns
$$\int \frac{dx}{\sqrt{1-x^8}}=x {}_2F_1(1/8, 1/2, 9/8, x^8)$$
The answer uses a hypergeometric function, which is typically a more complicated function than the integral given: it is typically a non Liouvillian function, which for special values, admits an integral expression. In fact, the calculation in the opposite way would be more reasonable, which is done by Kovacic algorithm. Still, this integral can in fact be expressed through elliptic integrals of the first kind
$$\frac{1+i-i\sqrt{2}}{4} \mathcal{F}\left( \left. \frac{i(2\sqrt{2}-3)(i\sqrt{2}+1+i+2x)}{(2x+1)(\sqrt{2}i-1-i-2x)} \right\vert 3-2\sqrt{2}\right)+$$
$$\frac{1+i+\sqrt{2}}{4} \mathcal{F}\left( \left. \frac{-\sqrt{2}-1-i-2ix}{(2x+1)(2x+1-i+i\sqrt{2})} \right\vert 3+2\sqrt{2}\right)$$
From the 50s, the interest then shifted to geometrical properties of an algebraic variety, the Jacobian, attached to the hyperelliptic curve $y^2=S(x)$, like Serre question \cite{paulhus2017completely}, and more recently hyperelliptic curves with automorphisms \cite{paulhus2008decomposing,joshi2020hypergeometric,lombardo2021decomposing}.

\begin{defi}
A divisor $D$ on a hyperelliptic curve $\mathcal{C}:y^2=S(x)$ is a function $\mathcal{C} \rightarrow \mathbb{Z}$ with finite support. We say that $D$ is principal if there exists a rational function $f$ on $\mathcal{C}$ such that $\hbox{val}_{z=z_0} f(z)=D(z_0),\;\forall z_0\in\mathcal{C}$.
The set of divisors modulo principal divisors defines an algebraic group of dimension $g=(\deg S-1)/2$, which is called the Jacobian of $\mathcal{C}$.
\end{defi}

The Jacobian $\mathcal{J}$ is an Abelian variety of dimension $g$, where $g$ is the genus of the curve, see \cite{griffiths1989introduction}. In genus $1$, this is the elliptic case, and the Jacobian is the elliptic curve itself. In higher genus, this is an Abelian variety, and as an algebraic group, it can sometimes decompose as a product of (irreducible) Abelian varieties of smaller dimension
$$\mathcal{J}= \mathcal{A}_1\times \dots \times \mathcal{A}_n$$
This factorization is unique up to isogeny of the factors. If a factor is of dimension $1$, then it is an elliptic curve.

\begin{defi}
We say the Jacobian has an elliptic factor $\mathcal{E}$ if $\mathcal{E}$ appears in the decomposition of $\mathcal{J}$. We called rank the number of such factors, and we say that $\mathcal{J}$ is completely decomposable if $\mathcal{J}$ is a product of elliptic curves.
\end{defi}

Elliptic factors can be detected as morphisms from the hyperelliptic curve $\mathcal{C}$ to an elliptic curve $\mathcal{E}$ (see Proposition \ref{prop1}. If the Jacobian is completely decomposable, then $g$ independent morphisms exist, and then all divisors in the Jacobian can be written as combinations of divisors of elliptic integrals composed by such morphisms. The independence condition allows to forbid composition of a morphism by several isogenies between elliptic curves, which would still count as one in the factorisation of the Jacobian). We will first present an algorithm to find such elliptic factors.

\begin{thm}\label{thm1}
Given a curve $y^2=S(x)$ and a bound $m$, algorithm \underline{\textit{EllipticFactors}} computes a maximal list of elements of the form $[\kappa,F,G]$ such that
\begin{itemize}
\item We have the relation $SG^2=F(F-1)(F-\kappa)$ with $F,G\in\overline{\mathbb{K}}(x),\kappa  \in \overline{\mathbb{K}}\setminus \{0,1\}$.
\item The degrees of numerator, denominator of $F,G$ are $\leq m$.
\item The morphisms are independent, i.e. the polynomials $F'/G$ are independent over $\mathbb{C}$.
\end{itemize}
\end{thm}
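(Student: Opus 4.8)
The plan is to prove Theorem~\ref{thm1} in three stages: first translate ``elliptic factor'' into the algebraic data $[\kappa,F,G]$ and into a statement about differentials; then bound and enumerate that data; then reduce the independence/maximality claim to linear algebra over $\mathbb C$.

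\emph{Stage 1 (setup).} By Proposition~\ref{prop1}, an elliptic factor of $\mathcal J$ admitting a representative of bounded degree is the same as a non-constant morphism $\phi:\mathcal C\to\mathcal E$ with $\mathcal E:w^2=z(z-1)(z-\kappa)$ compatible with the two hyperelliptic involutions; such a $\phi$ is $z=F(x)$, $w=\sqrt{S(x)}\,G(x)$, and the requirement that $w$ lie in $\overline{\mathbb K}(\mathcal C)=\overline{\mathbb K}(x)[\sqrt S]$ forces $F(F-1)(F-\kappa)=S\,G^2$ (the genus-$1$ target rules out the alternative that $F(F-1)(F-\kappa)$ be a square in $\overline{\mathbb K}(x)$), which is the first bullet; one also needs $\kappa\neq 0,1$ for $\mathcal E$ to be smooth. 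Pulling back the invariant differential gives $\phi^*(dz/w)=dF/(\sqrt S\,G)=(F'/G)\,dx/\sqrt S$. Since the pullback of a holomorphic differential along a morphism of smooth projective curves is holomorphic everywhere, and $H^0(\mathcal C,\Omega^1)=\{\,p(x)\,dx/\sqrt S:\deg p\le g-1\,\}$ with $g=(\deg S-1)/2$, the quantity $F'/G$ is automatically a polynomial of degree $\le g-1$; this is what lets the statement speak of ``the polynomials $F'/G$''. Finally, for morphisms $\phi_1,\dots,\phi_k$ as above with respective targets $\mathcal E_i$, the induced map on Jacobians $\mathcal J\to\mathcal E_1\times\cdots\times\mathcal E_k$ is surjective --- equivalently the $\mathcal E_i$ occur as independent factors in the isogeny decomposition of $\mathcal J$ --- if and only if the pulled-back differentials $\phi_i^*(dz/w)$, i.e.\ the polynomials $F_i'/G_i$, are linearly independent over $\mathbb C$. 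This equivalence is exactly the third bullet, and it is what allows the algorithm to test independence by cheap linear algebra.

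\emph{Stage 2 (enumeration).} Write $F=A/B$ and $G=C/D$ in lowest terms; by hypothesis each of $\deg A,\deg B,\deg C,\deg D$ is $\le m$. Clearing denominators turns $SG^2=F(F-1)(F-\kappa)$ into the polynomial identity $S\,C^2B^3=A(A-B)(A-\kappa B)\,D^2$. Comparing valuations at every point of $\mathbb P^1$ --- matching the ramification of $F$ over the branch locus $\{0,1,\kappa,\infty\}$ of $\mathcal E\to\mathbb P^1$ against the roots of $S$, subject to Riemann--Hurwitz for a map $F$ of degree $\le m$ --- shows that only finitely many ``ramification types'' can occur, and each one turns the unknown coefficients of $A,B,C,D$ together with $\kappa$ into the solution set of an explicit polynomial system. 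The algorithm solves each system by Gr\"obner bases, hence terminates. The relevant solution components are zero-dimensional: a positive-dimensional family would yield a continuous family of mutually non-isogenous elliptic quotients of the \emph{fixed} Jacobian $\mathcal J$, which is impossible, while inside any single isogeny class the degree bound leaves only finitely many morphisms. Up to a finite normalisation group acting on $(\kappa,F,G)$ (the six Legendre forms of a given curve, composed with the sign of $G$), the union of these finite solution sets is therefore a finite list containing every triple that satisfies the first two bullets; and conversely every triple on the list, having non-constant $F$ with $\kappa\neq0,1$, genuinely exhibits $\mathcal E_\kappa$ as a quotient of $\mathcal J$.

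\emph{Stage 3 (maximality), and the main obstacle.} From the enumerated triples the algorithm forms the polynomials $F'/G\in\mathbb C[x]_{<g}$, computes the rank $r$ of their $\mathbb C$-span, and keeps a subfamily realising a basis; by Stage~1 that subfamily is independent in the decomposition of $\mathcal J$, and since the list already contains a representative of every degree-$\le m$ elliptic factor, no further independent triple of degree $\le m$ can be adjoined --- so the returned list is maximal, and the three bullets hold (bullet~1 by construction, bullet~2 by the degree filter, bullet~3 by the basis selection). The main obstacle is the completeness of Stage~2: one must verify that bounding $\deg F,\deg G$ by $m$ genuinely bounds the ramification combinatorics, so that the list of ramification types is finite and effectively computable, and that solving the associated systems recovers, in lowest terms and after the stated normalisations, \emph{every} admissible $[\kappa,F,G]$ with no solution lost. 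A secondary but genuine point is the Stage~1 equivalence between independence of elliptic factors of $\mathcal J$ and $\mathbb C$-linear independence of the $F_i'/G_i$: granting Proposition~\ref{prop1} this is standard duality for Abelian varieties (surjectivity of $\mathcal J\to\prod\mathcal E_i$ is injectivity of the pullback on global differentials), but it is precisely what licenses replacing the a priori delicate notion of independence in the Jacobian by the linear-algebra criterion at the heart of the algorithm.
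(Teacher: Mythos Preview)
Your three-stage outline is correct and matches the paper's logic closely; Stage~1 and Stage~3 are essentially the paper's Proposition~\ref{prop1} and step~3c of the algorithm, restated in the language of pullbacks of holomorphic differentials. The substantive difference is Stage~2. The paper does not argue abstractly via ``ramification types'' and Riemann--Hurwitz; instead it makes the concrete algebraic observation that, writing $F=U/V$ with $\gcd(U,V)=1$, each of $U,\,V,\,U-V,\,U-\kappa V$ must be a square times a product of \emph{distinct} roots of $S$, and these four products form a partition $\mathcal R$ of the root set of $S$. This is exactly the system \eqref{eqUV}, and the algorithm loops over all partitions $\mathcal R$ and all degree pairs $(d_1,d_2)$ solving the resulting quadratic ideals $Id_{d_1,d_2}$. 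Since Theorem~\ref{thm1} asserts correctness of \emph{this specific} algorithm, your proof should tie ``ramification type'' to ``choice of partition $\mathcal R$'' explicitly; as written you are verifying that \emph{an} enumeration scheme works rather than that \underline{\textit{EllipticFactors}} does.

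For zero-dimensionality, the paper's Proposition~\ref{propfinite} argues in two steps: for fixed $\kappa$ it invokes Mordell--Weil over $\mathbb C(x)$ to get countably many solutions (hence finitely many of bounded degree), and for varying $\kappa$ it uses that the Jacobian has finitely many elliptic factors up to isogeny and that each isogeny class contributes only countably many $j$-invariants via modular polynomials. Your Zariski-closure argument reaches the same conclusion by a cleaner route, but the sentence ``inside any single isogeny class the degree bound leaves only finitely many morphisms'' is doing real work and deserves a name (de Franchis, or the Mordell--Weil argument the paper uses) rather than being asserted; a one-parameter family with constant $j$ is not excluded by your ``mutually non-isogenous'' clause alone.
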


Remark that the possible morphisms are not unique. Indeed, the elliptic factors in the Jacobian are unique up to isogeny, but there are infinitely many isogenous curves to a given elliptic curve: an elliptic curve is defined up to isomorphism by its j-invariant $j$, but infinitely many isogenous curves can be obtained through roots of the modular polynomials \cite{broker2012modular}. We also need an upper bound on the number of elliptic factors.

\begin{thm}\label{thm2}
Given a curve $y^2=S(x)$ and a prime $p$, the algorithm \underline{\textit{HyperellipticZeta}} computes the maximal number of degree $2$ factors of the Jacobian zeta function
$$\mathcal{Z}_{p^k}(T),\quad k\in\mathbb{N}^*,$$
which gives an upper bound on the rank of the Jacobian.
\end{thm}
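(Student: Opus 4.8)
The plan is to split the statement into an algorithmic claim and an arithmetic claim: (A) \emph{HyperellipticZeta} correctly evaluates $M:=\max_{k\ge1}d_k$, where $d_k$ is the number of monic irreducible degree-$2$ factors of $\mathcal Z_{p^k}(T)$ over $\mathbb Q$; and (B) $M\ge r$, where $r$ is the rank of $\mathcal J$.

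For (A), I would first pick a prime ideal $\mathfrak p\mid p$ of $\mathcal O_{\mathbb K}$ at which $y^2=S(x)$ has good reduction (all but finitely many divide $\mathrm{disc}(S)$) and set $q=\#(\mathcal O_{\mathbb K}/\mathfrak p)$, so $q=p$ when $\mathbb K=\mathbb Q$. Counting $\#\mathcal C(\mathbb F_{q^j})$ for $j=1,\dots,g$ (by any point-counting routine, e.g.\ direct enumeration of $S$-values and quadratic characters) pins down, via the Weil conjectures and the functional equation, the full $L$-polynomial $L_q(T)=\prod_{i=1}^{2g}(1-\alpha_iT)\in\mathbb Z[T]$; for every $k$ one then obtains $\mathcal Z_{q^k}(T)=\prod_i(1-\alpha_i^{\,k}T)$ from $L_q$ by a resultant / Newton-identity computation on the roots, \emph{without} factoring. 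The key point is that only finitely many $k$ matter: the $\alpha_i$ generate a number field of degree $\le 2g$, so any ratio $\alpha_i/\alpha_j$ or $\alpha_i^2/q$ that is a root of unity has order $n$ with $\varphi(n)\le 2g$, hence bounded; taking $N$ to be the lcm of these orders, whenever $k\equiv k'\pmod N$ (and both are not too small) the multiplicative relations among the $\alpha_i^{\,k}$, hence the Galois orbits on them, hence the $\mathbb Q$-factorization type of $\mathcal Z_{q^k}$, all coincide. Thus $d_k$ is eventually periodic in $k$ with an explicitly bounded period, $M$ is attained for some $k\le k_0(g)$, and the algorithm merely factors $\mathcal Z_{q^k}(T)$ for $k=1,\dots,k_0$ and returns $\max_k d_k$. (One checks along the way that an \emph{irreducible} degree-$2$ factor of $\mathcal Z_{q^k}$ automatically has the elliptic shape $1-aT+q^kT^2$, since its two roots are Weil numbers of equal absolute value.)

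For (B), by Proposition \ref{prop1} (cf.\ Theorem \ref{thm1}) a Jacobian of rank $r$ is isogenous, over some finite extension $L/\mathbb K$, to $\mathcal E_1\times\dots\times\mathcal E_r\times\mathcal B$ with the $\mathcal E_i$ elliptic and the morphisms independent. Enlarging $L$ we may assume the $\mathcal E_i,\mathcal B$ and the isogeny are defined over $L$ and that $\mathfrak p$ is unramified in $L$ with good reduction; let $\mathbb F_{q^f}$ be the residue field. Abelian subvarieties and isogenies reduce faithfully at primes of good reduction (N\'eron models, Tate), so over $\mathbb F_{q^f}$ one still has $\mathcal J\sim E_1'\times\dots\times E_r'\times B'$ with the $E_i'$ elliptic, whence $\mathcal Z_{q^f}(T)=\prod_{i=1}^rL_{E_i'}(T)\cdot L_{B'}(T)$, a product with $r$ distinguished degree-$2$ factors $L_{E_i'}$. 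These are irreducible unless some $E_i'$ is supersingular with Frobenius trace $\pm2q^{f/2}$, in which case replacing $f$ by a suitable odd multiple keeps the product decomposition while making every elliptic $L$-polynomial irreducible. Hence $d_k\ge r$ for some $k$ in the periodic range, so $M\ge r$ (possibly strictly, e.g.\ when $B'$ acquires elliptic factors or a quaternionic simple factor over $\overline{\mathbb F_p}$, consistent with the ``upper bound'' claim).

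The main obstacle is step (A): making ``only finitely many $k$ matter'' rigorous and effective. One has to control simultaneously which distinct eigenvalues $\alpha_i^{\,k}$ merge into one irreducible factor — governed by multiplicative relations among weight-one Weil numbers, where Kronecker-type bounds on roots of unity of bounded degree do the job — and the variation of the Galois action on the splitting field of $L_q$ with $k$. A secondary nuisance is the supersingular edge case in (B), where an honest elliptic factor transiently fails to produce an \emph{irreducible} degree-$2$ factor and one must pass to another exponent.
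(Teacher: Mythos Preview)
Your overall architecture matches the paper's: compute $\Psi_{p}$ once, obtain all $\Psi_{p^{k}}$ from it, and bound the relevant $k$ via the observation that a relation $\alpha_i=\xi\alpha_j$ with $\xi$ a primitive $k$-th root of unity forces $\varphi(k)\le 2g=\deg S-1$. The paper states this slightly differently (it derives the closed formula $\Psi_{p^{k}}(T^{k})=\prod_{j=1}^{k}\Psi_{p}(\xi^{j}T)$ to justify step~3 of the algorithm), but the finiteness mechanism is the same. Your treatment of part~(B) is in fact more careful than the paper's, which simply invokes the multiplicativity of $\Psi$ under isogeny decomposition as background.

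There is, however, a genuine gap in your handling of the supersingular edge case. You define $d_k$ as the number of \emph{irreducible} degree-$2$ factors and then, when an elliptic factor $E'_i/\mathbb F_{q^{f}}$ has trace $\pm 2q^{f/2}$ (so $L_{E'_i}(T)=(1\mp q^{f/2}T)^{2}$), you propose to ``replace $f$ by a suitable odd multiple'' to restore irreducibility. This does not work: if both Frobenius eigenvalues equal $q^{f/2}\in\mathbb Z$, then over $\mathbb F_{q^{fm}}$ they are $q^{fm/2}$, still rational for every $m$ (note $f$ is even here since $q^{f}$ must be a square), and the $L$-polynomial remains a perfect square of a linear factor for all further base changes. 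Thus no choice of $k$ makes that contribution an irreducible quadratic, and with your definition of $d_k$ the inequality $d_k\ge r$ can fail for every $k$.

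The paper avoids this by a different bookkeeping convention, built into step~4 of \underline{\textit{HyperellipticZeta}}: it counts factors of degree $\le 2$, weighting degree-$1$ factors by $1/2$. The proof then observes that since every root of $\Psi_{p^{k}}$ has modulus $p^{k/2}$, a rational root $\alpha$ forces $p^{k}/\alpha$ to be another rational root, so linear factors come in pairs and the count is an integer; equivalently, one is counting the rational $a_j$ in the grouping $\Psi_{p^{k}}(T)=\prod_j(T^{2}+a_jT+p^{k})$. With this convention each elliptic factor contributes exactly $1$ regardless of supersingularity, and the upper bound $M\ge r$ follows without the base-change manoeuvre. You should replace your definition of $d_k$ by this one and drop the ``odd multiple'' argument.
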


The situation is similar to the algorithm mwrank \cite{cremona2001classical} to compute Mordell Weil rank on elliptic curves over $\mathbb{Q}$. A direct search can be done, and modular upper bounds can be found. The question of course is if the rank always coincide to this upper bound for some prime $p$. This is not the case in general \cite{hashimoto1995shimura}, although for $\mathbb{K}=\mathbb{Q}$ no example is known. In all examples tested, it was always possible to find an upper bound matching the number of elliptic morphisms found. We can now present the effective calculation of hyperelliptic integrals.

\begin{thm}\label{thm3}
If $y^2=S(x)$ has a completely decomposable Jacobian, then any integral $I=\int P/(Q\sqrt{S})$ can be expressed using algebraic functions, logarithms and elliptic integrals. The algorithm \underline{\textit{HyperellipticToElliptic}} computes such an expression.
\end{thm}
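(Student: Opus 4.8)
The plan is to regard $\omega=\frac{P(x)}{Q(x)\sqrt{S(x)}}\,dx$ as a differential on $\mathcal{C}\colon y^2=S(x)$ and to split it, by standard symbolic integration reductions, into pieces each of which is --- up to an exact differential and a $d\log$ --- the pullback along one of the elliptic factors of a standard elliptic differential of the first, second or third kind. Since $\mathcal{J}$ is completely decomposable, calling \emph{EllipticFactors} (Theorem~\ref{thm1}) with the bound $m$ taken large enough returns $g=(\deg S-1)/2$ independent triples $[\kappa_i,F_i,G_i]$, i.e.\ morphisms $\phi_i\colon\mathcal{C}\to\mathcal{E}_i\colon w^2=z(z-1)(z-\kappa_i)$, $(x,y)\mapsto(F_i(x),G_i(x)\,y)$ (termination is clear since some system of $g$ independent morphisms exists, hence of finite degree). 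Pulling back,
\[
\phi_i^*\!\Big(\tfrac{dz}{w}\Big)=\frac{F_i'}{G_i}\,\frac{dx}{y},\qquad
\phi_i^*\!\Big(\tfrac{z\,dz}{w}\Big)=\frac{F_iF_i'}{G_i}\,\frac{dx}{y},
\]
the first a holomorphic differential on $\mathcal{C}$ (by the independence hypothesis of Theorem~\ref{thm1} the $F_i'/G_i$ are a basis of $\{R\in\overline{\mathbb{K}}[x]:\deg R\le g-1\}$), the second a differential of the second kind. Independence also forces the product map $\mathcal{J}\to\prod_i\mathcal{E}_i$ to be an isogeny, so $H^1_{\mathrm{dR}}(\mathcal{C})$ is the direct sum of the $\phi_i^*H^1_{\mathrm{dR}}(\mathcal{E}_i)$ and the $2g$ classes of $\phi_i^*(dz/w),\phi_i^*(z\,dz/w)$ form a basis of it (which also has the classical basis $\frac{x^k\,dx}{y}$, $0\le k\le 2g-1$); fix a complementary isogeny $\psi\colon\prod_i\mathcal{E}_i\to\mathcal{J}$.

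Next I would bring $I$ to a canonical shape: a partial fraction expansion of $P/Q$ followed by the classical Hermite type reduction on $\mathcal{C}$ (using $d(x^my)$ to lower degrees in the polynomial part, pole order reduction at the finitely many $a_j$ with $Q(a_j)=0\neq S(a_j)$, and one further step to absorb contributions of roots of $Q$ at branch points) yields
\[
I=g_0+\int\frac{A(x)}{\sqrt S}\,dx+\sum_j c_j\int\frac{dx}{(x-a_j)\sqrt S},
\]
with $g_0\in\overline{\mathbb{K}}(x,\sqrt S)$ explicit, $A\in\overline{\mathbb{K}}[x]$ of degree $\le 2g-1$ and $c_j\in\overline{\mathbb{K}}$. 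The middle term lies in the $\overline{\mathbb{K}}$-span of $\frac{x^k\,dx}{y}$, $0\le k\le 2g-1$; writing it in the de Rham basis above is one linear system over $\overline{\mathbb{K}}$, and the change of variable $z=F_i(x)$ then gives $\int\frac{A}{\sqrt S}\,dx=g_1+\sum_i\big(\alpha_i\,\mathcal F(F_i(x)\mid\kappa_i)+\beta_i\,\mathcal E(F_i(x)\mid\kappa_i)\big)$ with $g_1$ explicit and algebraic.

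For a third kind term, let $P_j^\pm$ be the two points of $\mathcal{C}$ over $x=a_j$ and $D_j=P_j^+-P_j^-$; then $\frac{dx}{(x-a_j)\sqrt S}$ equals, up to a holomorphic differential, $\frac1{\sqrt{S(a_j)}}$ times the third kind differential $\eta_{D_j}$ with residues $\pm1$ at $P_j^\pm$. Since $\psi$ is an isogeny there are a computable $N\in\mathbb{N}^*$ and $\psi'$ with $\psi\psi'=[N]$; applying $\psi'$ to the class of $D_j$ produces points $R_{i,j}\in\mathcal{E}_i$ and, after clearing denominators, a divisor relation $N D_j=\sum_i\phi_i^*\big((R_{i,j})-(O_i)\big)+\mathrm{div}(h_j)$ with $h_j$ rational on $\mathcal{C}$ over a finite extension of $\mathbb{K}$. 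Matching residues at $P_j^\pm$ and at the points of $\mathcal{C}$ over $R_{i,j}$ and over the origins $O_i$ shows $N\eta_{D_j}=\sum_i\phi_i^*\eta_{(R_{i,j})-(O_i)}+\frac{dh_j}{h_j}$ plus a holomorphic differential. Using that on $\mathcal{E}_i$ one has $\int\eta_{(R)-(O_i)}=\tfrac12\Pi(z,z(R)\mid\kappa_i)+\tfrac12\log(z-z(R))$ (a mere logarithm when $R$ is a torsion point), substituting $z=F_i(x)$, integrating, dividing by $N$ and folding the holomorphic part into the treatment of the middle term, one writes $\int\frac{c_j\,dx}{(x-a_j)\sqrt S}$ in terms of $\Pi(F_i(x),\cdot\mid\kappa_i)$, $\log h_j$, the logarithms $\log(F_i(x)-\cdot)$, $\mathcal F(F_i(x)\mid\kappa_i)$, $\mathcal E(F_i(x)\mid\kappa_i)$ and algebraic functions. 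Collecting $g_0,g_1$, the logarithms and the $\mathcal F,\mathcal E,\Pi$ terms gives the asserted expression; this constructive procedure is the algorithm \emph{HyperellipticToElliptic}.

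I expect the third kind step to be the main obstacle: one must \emph{effectively} produce the isogeny data --- $N$, the complementary $\psi'$, the points $R_{i,j}$, and above all the rational function $h_j$ on $\mathcal{C}$ witnessing the divisor relation --- while keeping everything over a finite extension of $\mathbb{K}$; the subsequent reduction of the $\phi_i^*\eta_{(R_{i,j})-(O_i)}$ to the standard elliptic integrals needs the same field control. By comparison the first/second kind part is routine linear algebra once the de Rham basis statement (which uses only the independence hypothesis of Theorem~\ref{thm1}) is in hand, the one minor point being the choice of the bound $m$ passed to \emph{EllipticFactors}.
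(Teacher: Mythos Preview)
Your argument for the existence of an elliptic expression is sound, but it follows a different route from the paper's, and your final sentence (``this constructive procedure is the algorithm \emph{HyperellipticToElliptic}'') is not accurate: the paper's algorithm differs from yours in two substantial ways.

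First, the paper's \emph{HermiteReduction} is not the classical reduction you sketch. It already incorporates the second-kind elliptic integrals $\mathcal{E}(F_i(x)\mid\kappa_i)$ into the ansatz for the exact part (via the terms $a_i F_iF_i'/G_i$), so that the output integrand has only simple poles everywhere, including at infinity; the necessary linear independence of the $\mathcal{E}(F_i(x)\mid\kappa_i)$ modulo exact differentials is established in Lemma~\ref{lem2} by a period/cycle argument rather than by your de~Rham isogeny claim. Your version instead leaves a polynomial numerator $A$ of degree $\le 2g-1$ and handles the second-kind piece by a separate change of basis in $H^1_{\mathrm{dR}}$. Both are valid, but they are different procedures.

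Second, and more importantly, the third-kind step in the paper does not proceed by computing an explicit complementary isogeny $\psi'$, points $R_{i,j}$, and a witnessing function $h_j$ as you propose. Instead the paper builds, via \emph{EllipticDivisors}, a universal expression~\eqref{eqpi2}: a sum $\sum_i\Pi(F_i(x),c_i\mid\kappa_i)$ minus $\log R(x,\sqrt S)-\log R(x,-\sqrt S)$, where $R$ is an explicit polynomial in $x,y$ depending on free parameters $(c_i,d_i)$ lying on the elliptic factors. By construction the derivative of~\eqref{eqpi2} has at most $g$ simple poles with residues $\pm1$, and surjectivity of the Abel--Jacobi map guarantees that for each pole $\alpha$ of the reduced integrand one can solve the system in step~4 of \emph{HyperellipticToElliptic} (nonlinear in the $c_k,d_k$, linear in the $a_j$) so that the single remaining pole sits at $\alpha$. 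This sidesteps precisely the effectivity issues you flag in your last paragraph: no isogeny has to be made explicit, and the logarithmic term comes for free as $\log R$ rather than as an $h_j$ to be discovered. Your isogeny approach makes the geometry transparent and is perfectly adequate for the existence statement, but the paper's parametrized-divisor approach is what makes the stated algorithm effective.
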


If the Jacobian is not completely decomposable, not all integrals $I$ with radical $\sqrt{S}$ can be expressed with elliptic integrals, and the algorithm can fail. In such case, we do not guarantee that $I$ does not admit an elliptic expression, as a more careful analysis of $\mathcal{J}/(\mathcal{E}_1\times\dots\times \mathcal{E}_n)$ would be necessary. This is a Prym variety, and similar to what Trager \cite{trager2022integration} did, we should test for torsion of divisors in this Abelian variety, and if yes then decompose divisors in a torsion part and in a part in $\mathcal{E}_1\times\dots\times \mathcal{E}_n$.

The plan of the article is the following. In section $2$, we will present the algorithm to find elliptic factors and the modular bounding algorithm. In section $3$, we will present the explicit construction of the morphism $\mathcal{E}_1\times\dots \mathcal{E}_n \rightarrow \mathcal{J}$, and its application to generalized Hermite reduction and integration in the completely decomposable case.

\section{Computing Elliptic Factors}

\subsection{Jacobians and elliptic morphisms}

At the moment, we have two notions. The elliptic morphisms, which are rational functions from the hyperelliptic curve to an elliptic curve, and the elliptic factors in the Jacobian. Let us prove that they correspond to each other.

\begin{prop}\label{prop1}
The Jacobian of a hyperelliptic curve has $r$ elliptic factors $\mathcal{E}_1,\dots,\mathcal{E}_r$ if and only if there exist $n$ non constant morphisms $(F_i(x),yG_i(x))$ from the hyperelliptic curve to $\mathcal{E}_i$ such that the polynomials $F_i'/G_i$ are independent over $\mathbb{C}$.
\end{prop}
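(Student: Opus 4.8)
The plan is to translate between the two languages — rational maps of curves and maps of abelian varieties — via the Albanese/Jacobian functoriality. First I would recall that any non-constant morphism $\phi:\mathcal{C}\to\mathcal{E}$ to an elliptic curve induces, by Albanese functoriality, a surjective morphism of abelian varieties $\phi_*:\mathcal{J}\to\mathcal{E}$ (here $\mathcal{E}$ is its own Jacobian), and dually a morphism $\phi^*:\mathcal{E}\to\mathcal{J}$ with $\phi_*\circ\phi^*=[\deg\phi]$, so that $\mathcal{E}$ (up to isogeny) is a factor of $\mathcal{J}$. Conversely, if $\mathcal{E}$ is an elliptic factor of $\mathcal{J}$, Poincaré reducibility gives a surjection $\mathcal{J}\to\mathcal{E}$, and composing with the Abel–Jacobi embedding $\mathcal{C}\hookrightarrow\mathcal{J}$ (based at a Weierstrass point, say $\infty$) yields a non-constant morphism $\mathcal{C}\to\mathcal{E}$. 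So the correspondence ``elliptic factor $\leftrightarrow$ elliptic morphism'' is essentially formal; the work is in (a) putting the morphism in the normalized coordinate shape $(F(x),yG(x))$ with $SG^2=F(F-1)(F-\kappa)$, and (b) matching the \emph{number} $r$ of factors with the independence condition on the $F_i'/G_i$.

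For (a): because $\mathcal{C}$ is hyperelliptic with hyperelliptic involution $\iota:(x,y)\mapsto(x,-y)$, any map to an elliptic curve $\mathcal{E}$ either factors through the quotient $\mathbb{P}^1=\mathcal{C}/\iota$ — impossible, since $\mathbb{P}^1$ admits no non-constant map to $\mathcal{E}$ — or, after composing with a suitable automorphism of $\mathcal{E}$, is anti-invariant under $\iota$ in the sense that $\iota$ corresponds to $[-1]$ on $\mathcal{E}$. Writing $\mathcal{E}$ in Legendre form $v^2=u(u-1)(u-\kappa)$, anti-invariance forces the $u$-coordinate of the image to be a rational function of $x$ alone, $u=F(x)$, and the $v$-coordinate to be odd in $y$, i.e. $v=yG(x)$ with $G\in\overline{\mathbb{K}}(x)$; substituting into the curve equation gives exactly $y^2G^2=F(F-1)(F-\kappa)$, i.e. $SG^2=F(F-1)(F-\kappa)$. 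This is the content that ties Proposition 1 to the data $[\kappa,F,G]$ of Theorem 1.

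For (b): the pullback of the invariant differential $du/v$ on $\mathcal{E}$ under such a morphism is $\phi^*(du/v)=F'(x)\,dx/(yG(x))$, a holomorphic differential on $\mathcal{C}$, i.e. an element of $H^0(\mathcal{C},\Omega^1)$ whose class is recorded by the polynomial $F'/G$ (in the standard basis $x^k\,dx/y$, $0\le k\le g-1$). A collection of morphisms $\phi_i:\mathcal{C}\to\mathcal{E}_i$ gives rise to independent factors of $\mathcal{J}$ precisely when the induced maps $\mathcal{E}_i\to\mathcal{J}$ (equivalently $\mathcal{J}\to\mathcal{E}_i$) have images generating a subvariety isogenous to $\prod\mathcal{E}_i$, and — by the identification of $\mathrm{Lie}(\mathcal{J})$ with $H^0(\mathcal{C},\Omega^1)^\vee$ — this happens exactly when the pulled-back differentials $\phi_i^*(du/v)$ are linearly independent in $H^0(\mathcal{C},\Omega^1)$, i.e. when the $F_i'/G_i$ are independent over $\mathbb{C}$. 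Running this argument in both directions, and noting $r\le g=(\deg S-1)/2$ since $\dim H^0(\mathcal{C},\Omega^1)=g$, completes the equivalence.

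The main obstacle I anticipate is part (a), the normalization step: one must argue carefully that \emph{every} elliptic factor is realized by a morphism anti-invariant under the hyperelliptic involution (handling the automorphisms of $\mathcal{E}$ and the choice of base point), and that the target can be taken in Legendre form with a genuine parameter $\kappa\in\overline{\mathbb{K}}\setminus\{0,1\}$ rather than over a possibly larger field; the abelian-variety bookkeeping in (b) — that independence of differentials is the correct avatar of independence of factors up to isogeny, rather than something weaker — also needs to be stated precisely, but it is standard Poincaré reducibility once (a) is in place.
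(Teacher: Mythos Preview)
Your proof is correct and takes a genuinely different route from the paper's. The paper argues analytically throughout: for the forward direction it first shows $F_i'/G_i$ is a polynomial of degree $\le g-1$ by examining singularities of $\mathcal{F}(F_i(x)\mid\kappa_i)$, then performs explicit row operations on the Abel--Jacobi system $\sum_i\int^{x_i} x^{j-1}\,dx/\sqrt{S}=t_j$ so that the first $r$ rows become elliptic integrals, invertible by $\wp$; for the converse it starts from a parametrization of $\mathcal{J}$ by $\wp$ functions and a residual Abelian function, specializes $x_2,\dots,x_g$ to constants to extract an \emph{algebraic} map $\mathcal{C}\to\mathcal{E}_j$, and then has to argue separately that this map becomes rational after multiplication by an integer on $\mathcal{E}_j$ (via a monodromy/lattice argument), before finally normalizing it to the form $(F(x),yG(x))$ by the same ``$(F,G)(x,y)\oplus(F,G)(x,-y)$ is $\iota$-invariant, hence constant'' trick you use. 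Your approach replaces all of this with Albanese/Picard functoriality and Poincar\'e reducibility, reading off the independence condition from the identification $T_0^*\mathcal{J}\cong H^0(\mathcal{C},\Omega^1)$; this is cleaner and sidesteps the paper's algebraic-to-rational step entirely, since the composition of the Abel--Jacobi embedding with a projection $\mathcal{J}\to\mathcal{E}$ is rational from the outset. The paper's hands-on argument, by contrast, makes the polynomiality and degree bound on $F_i'/G_i$ explicit along the way, and its manipulation of the Abel--Jacobi map is reused verbatim later (Lemma~\ref{lem2} and \underline{\textit{EllipticDivisors}}), so it fits the algorithmic aims of the paper better even if it is less economical as a proof.
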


Remark, in the following, we will compose abusively an elliptic integral by $F_i(x)$, although rigorously, we would need to precise the other component $yG_i(x)$. This abuse is traditional as it is used for writing composition of integrals in CAS systems. However, this leaves an ambiguity on the sign of $G_i(x)$, and non careful simplifications results in random non consistent branch choices for the square roots. In the implementation, to remember this sign choice, a parameter $\epsilon_i=\pm 1$ is added in front of the integrals.

\begin{proof}
Let us first begin by assuming that we have $r$ non constant morphisms $(F_i(x),yG_i(x))$ with $F_i'/G_i$ are independent over $\mathbb{C}$. Let us first check that $F_i'/G_i$ are indeed polynomials.

Considering the elliptic integral $\mathcal{F}(F_i(x) \mid \kappa)$, this is a smooth function everywhere on the elliptic curve (although multivalued), and thus its derivative in $x$ is smooth everywhere outside points with vertical tangents. Its derivative is given by
$$\mathcal{F}(F_i(x) \mid \kappa)'= F_i'(x)/(G_i(x)\sqrt{S(x)}).$$
Thus the rational fraction $F_i'(x)/G_i(x)$ cannot have poles outside roots of $S$. At roots of $S$, if $F_i'(x)/G_i(x)$ had a pole, then the singularity would be of order at least $3/2$, and thus after integration, $\mathcal{F}(F_i(x) \mid \kappa)$ would have a singular point. Thus $F_i'(x)/G_i(x)$ has no poles at all, and so is a polynomial. Moreover, as it has no pole at infinity, its degree is $\leq g-1$.

We consider the Abel Jacobi map
$$\sum\limits_{i=1}^g \int^{x_i} \frac{x^{j-1}}{\sqrt{S(x)}} dx = t_j,\;\;j=1\dots g$$
The Abel Jacobi theorem \cite{griffiths1989introduction} states that this map going from the Jacobian to $\mathbb{C}^g/\Lambda$ can be inverted and gives a meromorphic parametrization of the Jacobian. The divisors support points abscissa are given by the $x_i$, and thus the symmetric functions in the $x_i$ can be written meromorphically in the $t_i$.

So now making a linear combination of the lines, we can make appear the polynomials $F_i'(x)/G_i(x)$ in the numerators in the first lines. Replacing also the $t_i$ by a suitable linear combination of the $t_i$, the system becomes
$$\sum\limits_{i=1}^g \int^{x_i} \frac{F_j'(x)}{G_j(x)\sqrt{S(x)}} dx = t_j,\; j=1\dots r$$
$$\sum\limits_{i=1}^g \int^{x_i} \frac{x^{j-1}}{\sqrt{S(x)}} dx = t_j,\; j=r+1\dots g$$
Noting $\oplus$ the addition law on elliptic curves the $n$ first lines become
$$\int^{F_j(x_1)\oplus \dots \oplus F_j(x_g)} \frac{1}{\sqrt{z(z-1)(z-\kappa_j)}} dz = t_j$$
and thus can be inverted using Weierstrass $\wp$ function
$$F_i(x_1)\oplus \dots \oplus F_i(x_g) = \wp_{\kappa}(t_i)$$
Thus the symmetric functions in the $x_i$ can be obtained as algebraic expressions in $\wp_{\kappa}(t_i)$ and an Abelian function coming from the inversion of the last $g-n$ equations. As this inverse is in fact meromorphic, moving the $t_1,\dots,t_r$ with fixed $t_{r+1},\dots t_g$ defines an Abelian subvariety of dimension $r$ of the Jacobian, and the $t_1,\dots,t_n$ parametrize it such that addition law on the Jacobian corresponds to the classical addition of the parameters $t_j$. Thus the Jacobian factors by $\mathcal{E}_1\times,\dots,\times \mathcal{E}_r$.

Conversely, assume that the Jacobian factors by $\mathcal{E}_1\times,\dots,\times \mathcal{E}_r$. Then it can be parametrized by $r$ Weierstrass functions and a $g-r$ variables Abelian function $A$. Substituting, we have relations for $j=1\dots g$
$$\sum\limits_{i=1}^g \int^{\Phi_i(\wp(t_1),\dots,\wp(t_r),A(t_{r+1},\dots,t_g))} \frac{x^{j-1}}{\sqrt{S(x)}} dx = \sum\limits_{k=1}^g a_{ik} t_k$$
where $\Phi_i$ are algebraic functions. The coefficients $a_{ik}$ determine the decomposition of the Jacobian seen as the quotient $\mathbb{C}^g/\Lambda$, where elliptic factors are simply straight lines. Noting
$$s_i=\wp(t_i),\; i=1\dots r,\quad (s_{r+1},\dots,s_g)=A(t_{r+1},\dots,t_g),$$
and making linear combinations on the lines to obtain on the right side $t_1,\dots,t_r$, we have for $j=1\dots r$
$$\sum\limits_{i=1}^g \int^{\Phi_i(s_1,\dots,s_g)} \frac{P_j(x)}{\sqrt{S(x)}} dx = \int^{s_j} \frac{1}{\sqrt{z(z-1)(z-\kappa_j)}} dz$$
where $P_j(x)$ are polynomials of degree $\leq g-1$. We can now invert algebraically the $\Phi_i$, giving for $j=1\dots r$
$$\sum\limits_{i=1}^g \int^{x_i} \frac{P_j(x)}{\sqrt{S(x)}} dx = \int^{\Phi_j^{-1}(x_1,\dots,x_g)} \frac{1}{\sqrt{z(z-1)(z-\kappa_j)}} dz.$$
Fixing $x_2,\dots,x_g$ to some generic constants, this gives $r$ algebraic morphisms from the hyperelliptic curve to the elliptic curves $y^2=z(z-1)(z-\kappa_j)$.

Let us proof that, up to a small transformation, algebraic morphisms are rational. If
$$\int \frac{P(x)}{\sqrt{S(x)}} dx= \int^{f(x)} \frac{1}{\sqrt{z(z-1)(z-\kappa)}} dz$$
then the monodromy constants of the hyperelliptic integral belong to a lattice whose points are such that a multiple of them belong to $\Lambda'$, the lattice associated to the elliptic integral on the right. The possible ratios are bounded by the algebraic degree of $f$. Thus multiplying by a suitable integer $k\in\mathbb{N}^*$ and applying $\wp$, we have
$$\wp\left( k \int \frac{P(x)}{\sqrt{S(x)}} dx \right)=k.f(x)$$
where $.$ denotes the multiplication on elliptic curves. By construction, the left hand side is meromorphic, the right hand side is algebraic. Thus $k.f$ is rational.

Let us now consider a rational morphism $(F,G)$ from $y^2=S(x)$ to an elliptic curve. It is rational in $x,y$, and can be assumed to be polynomial in $y$ of degree $1$. Building the other morphism $(F,G)(x,y) \oplus (F,G)(x,-y)$, this function is by construction symmetric in $y$, and thus is function of $x$ only. If this function was non constant, this would produce a rational parametrization of $v^2=u(u-1)(u-\kappa)$, which is impossible as it is an elliptic curve. Thus $(F,G)(x,y) \oplus (F,G)(x,-y)$ is constant, and up to a translation on the elliptic curve, we can assume it is the point $O$ (the point at infinity). Thus any rational elliptic morphism can be written
$$(F(x),yG(x)),\quad F,G\in\mathbb{C}(x)$$
Finally, we have obtained $r$ rational morphisms such that
$$\int \frac{k_jP_j(x)}{\sqrt{S(x)}} dx= \int^{F_j(x)} \frac{1}{\sqrt{z(z-1)(z-\kappa_i)}} dz$$
and by construction, the polynomials $P_j$ and so the polynomials $k_jP_j$ are independent over $\mathbb{C}$. 
\end{proof}

\subsection{Finding Elliptic Morphisms}

The condition for $(F_i(x),yG_i(x))$ being an elliptic morphism is that it satisfies
$$SG^2=F(F-1)(F-\kappa),\;\; \kappa\neq 0,1,\; F\notin\mathbb{C}.$$
Such an equation can be solved up to degree $m$ by brute force using Groebner basis, requiring typically $4m$ unknowns with equations of degree $4$. But there is a better approach. Noting $F=U/V$, we see that the simple roots of $U,V,U-V,U-\kappa V$ are roots of $S$, as $U,V,U-V,U-\kappa V$ cannot have common roots. The other roots should have even multiplicities, and thus this equation can be transformed into
\begin{equation}\begin{split}\label{eqUV}
U=A^2\prod_{i\in\mathcal{R}_1} (x-\alpha_i),\; V=B^2\prod_{i\in\mathcal{R}_2} (x-\alpha_i)\\
U-V=C^2\prod_{i\in\mathcal{R}_3} (x-\alpha_i),\; U-\kappa V=D^2\prod_{i\in\mathcal{R}_4} (x-\alpha_i)
\end{split}\end{equation}
where $\mathcal{R}$ is a partition of the root indices of $S$. Now, for each choice of $\mathcal{R}$, this is a system with at most $2m$ unknowns with quadratic equations.

\begin{prop}\label{propfinite}
For a given degree bound $m$, there are finitely many non constant solutions to $SG^2=F(F-1)(F-\kappa)$ with $\kappa\neq 0,1$.
\end{prop}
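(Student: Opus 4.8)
The plan is to recast a solution as a morphism to an elliptic curve, in the spirit of Proposition~\ref{prop1}, and then exploit the rigidity of such morphisms together with the constraint imposed by the hyperelliptic involution; this route seems hard to avoid, since even proving directly that each of the finitely many polynomial systems coming from~\eqref{eqUV} is zero-dimensional would amount to ruling out positive-dimensional families of elliptic morphisms. So first I would observe that a non-constant triple $(F,G,\kappa)$ with $SG^2=F(F-1)(F-\kappa)$ and $\kappa\neq 0,1$ is precisely a non-constant morphism $\phi=(F(x),yG(x))\colon\mathcal{C}\to E_\kappa$, where $\mathcal{C}\colon y^2=S(x)$ and $E_\kappa\colon w^2=z(z-1)(z-\kappa)$, and that $\deg\phi=\deg F$: indeed $\overline{\mathbb{K}}(\mathcal{C})$ has degree $2\deg F$ over $\overline{\mathbb{K}}(F)$ and degree $2$ over $\phi^{*}\overline{\mathbb{K}}(E_\kappa)=\overline{\mathbb{K}}(F)[yG]$, the latter because $z(z-1)(z-\kappa)$ is not a square in $\overline{\mathbb{K}}(z)$. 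Hence a solution with degrees bounded by $m$ yields a morphism of degree $\leq m$ from $\mathcal{C}$ to an elliptic curve, and it suffices to bound the number of such morphisms that have the special shape $(F(x),yG(x))$.

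For that I would invoke the classification of morphisms from the fixed curve $\mathcal{C}$ to elliptic curves. By the Albanese property, every non-constant $\phi\colon\mathcal{C}\to E$ factors as $\tau_c\circ(\psi\circ\iota_{p_0})$, with $\tau_c$ a translation of $E$, $\iota_{p_0}$ an Abel--Jacobi embedding and $\psi\in\mathrm{Hom}(\mathcal{J},E)$ surjective, $\mathcal{J}=\mathrm{Jac}(\mathcal{C})$. Here $\deg\phi$ depends only on $\psi$ and is a positive-definite quadratic form on the finitely generated group $\mathrm{Hom}(\mathcal{J},E)$, while $E$ must be isogenous --- by an isogeny whose degree is bounded in terms of $\deg\phi$ --- to one of the finitely many elliptic isogeny factors of $\mathcal{J}$. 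Consequently only finitely many pairs $(E,\psi)$ occur with $\deg\phi\leq m$; equivalently, there are finitely many translation-orbits of degree-$\leq m$ morphisms from $\mathcal{C}$ to elliptic curves. (This is a finiteness of the same flavour as de Franchis' theorem; the facts used about $\mathcal{J}$ are standard, cf.~\cite{griffiths1989introduction}.)

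To finish I would shrink each translation-orbit to a finite set using the hyperelliptic involution $\iota(x,y)=(x,-y)$. Since negation on $E_\kappa$ is $(z,w)\mapsto(z,-w)$, a morphism has the special shape $(F(x),yG(x))$ exactly when $\phi\circ\iota=\ominus\phi$, and this condition is stable under pointwise subtraction, because $(\phi_1\ominus\phi_2)\circ\iota=(\phi_1\circ\iota)\ominus(\phi_2\circ\iota)=(\ominus\phi_1)\ominus(\ominus\phi_2)=\ominus(\phi_1\ominus\phi_2)$. So if $\phi_0$ and $\phi$ both have this shape and lie in one orbit, then $c:=\phi\ominus\phi_0$ is a \emph{constant} morphism with $c=c\circ\iota=\ominus c$, hence $c\in E_\kappa[2]$; as $\#E_\kappa[2]=4$, each orbit contains at most four solution-morphisms. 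Together with the previous step this gives finitely many solution-morphisms of degree $\leq m$, and since $\phi$ determines $(F,G,\kappa)$, the proposition follows.

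The genuine obstacle is the middle step. That $\mathcal{J}$ has only finitely many elliptic isogeny factors is formal from the essentially unique isogeny decomposition, but the two quantitative inputs --- that only finitely many isogenous copies of a fixed elliptic curve can be reached by isogenies below a given degree bound (this is what ultimately confines $\kappa$ to a finite set) and that $\mathrm{Hom}(\mathcal{J},E)$ is a lattice on which $\deg$ is a positive-definite quadratic form --- both rest on the polarization theory of $\mathcal{J}$; granting these, the remainder is elementary manipulation with the group law on $E_\kappa$.
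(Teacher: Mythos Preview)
Your proof is correct and takes a genuinely different route from the paper's. The paper argues by a soft countability trick used twice: for a fixed $\kappa$ it invokes the Mordell--Weil/Lang--N\'eron theorem over $\mathbb{C}(\mathcal{C})$ to conclude that solutions are countable, then observes that an infinite set of bounded degree would have positive-dimensional Zariski closure, contradicting countability; and for varying $\kappa$ it notes that only finitely many isogeny classes of elliptic factors occur, that the isogenous $j$-invariants form a countable set via modular polynomials, and repeats the Zariski-closure argument. Your argument instead works directly with the Albanese factorisation $\phi=\tau_c\circ\psi\circ\iota_{p_0}$, uses that $\mathrm{Hom}(\mathcal{J},E)$ is a lattice on which the degree is a positive-definite quadratic form to bound the number of $\psi$'s, bounds the isogeny degree from $E$ to a fixed elliptic factor in terms of $\deg\phi$, and then collapses the translation freedom to $E_\kappa[2]$ via the equivariance $\phi\circ\iota=\ominus\phi$. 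The paper's argument is shorter and avoids any quantitative control, at the price of resting on a non-constructive Zariski-closure step; your argument is heavier in inputs (polarisation theory, the quadratic nature of degree on $\mathrm{Hom}$) but in principle yields effective bounds and makes the role of the hyperelliptic involution explicit --- this same $2$-torsion observation is implicitly what rescues the paper's Mordell--Weil step from the uncountably many constant points of $E_\kappa(\mathbb{C}(\mathcal{C}))$. The point you flag as the genuine obstacle, bounding the isogeny degree to a fixed factor, is exactly where the paper substitutes the countability-plus-Zariski shortcut.
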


\begin{proof}
For a given $\kappa\neq 0,1$, the equation $SG^2=F(F-1)(F-\kappa)$ is an elliptic Diophantine equation over $\mathbb{C}(x)$. The Mordell Weil theorem applies, and thus a finite basis of solutions exist, and thus countably many. If infinitely many existed of the same degree, then the Zariski closure of this set would define a one parameter family of solutions, and thus uncountably many. Thus for each degree, only finitely many exist.

If a solution for a given $\kappa$ exists, then the corresponding elliptic curve $y^2=z(z-1)(z-\kappa)$ should factor in the Jacobian. There are finitely many elliptic factors up to isogeny. However, there are infinitely many elliptic curves isogenous to a given elliptic curve. The relation between their j-invariant is given by modular polynomials, which are countably many. As $j\kappa^2(\kappa-1)^2=256(\kappa^2-\kappa+1)^3$, there are countably many possible $\kappa$. As before, if there were infinitely many realisable $\kappa$ for a given degree, taking Zariski closure would give a continuum of solutions, and thus uncountably many $\kappa$. Thus only finitely many $\kappa$ are possible.
\end{proof}

To use effectively Proposition \ref{propfinite}, we need to ensure that a rational function $U/V$ is uniquely represented. Adding to \eqref{eqUV} the additional constraints
$$\kappa(\kappa-1)\hbox{resultant}(A,B)\neq 0,\; A^2=\mu \tilde{A}^2,\;\tilde{A},B \hbox{ unitary} $$
will give an algebraic system with finitely many solutions, and thus be defined by a zero dimensional ideal. Let us present the algorithm.\\

\noindent
\underline{\textit{EllipticFactors}}\\
\textsf{Input:} A hyperelliptic curve $y^2=S(x)$, a bound $m\geq 2$.\\
\textsf{Output:} A list of $[\kappa,F,G]$ of independent solutions of $SG^2=F(F-1)(F-\kappa)$, $F,G\in\overline{\mathbb{K}}(x),\kappa\neq 0,1$
\begin{enumerate}
\item Compute all partitions $\mathcal{R}$ of $[1,\dots,\deg S]$ in $4$ sets.
\item Compute the splitting field $\mathbb{L}$ contemning the roots of $S$.
\item $L:=\{ \}$. For each partition in $\mathcal{R}$ do
\begin{enumerate}
\item Note $Id_{d_1,d_2}=($
$$\hbox{coeffs}_x(\mu A^2\prod_{i\in\mathcal{R}_1} (x-\alpha_i)-       B^2\prod_{i\in\mathcal{R}_2} (x-\alpha_i)-C^2\prod_{i\in\mathcal{R}_3} (x-\alpha_i)),$$
$$\hbox{coeffs}_x(\mu A^2\prod_{i\in\mathcal{R}_1} (x-\alpha_i)-\kappa B^2\prod_{i\in\mathcal{R}_2} (x-\alpha_i)-D^2\prod_{i\in\mathcal{R}_4} (x-\alpha_i)),$$
$$\kappa(\kappa-1)\hbox{resultant}(A,B)\epsilon+1)$$
where $A,B$ are generic unitary polynomials of degree $d_1,d_2$, $C,D$ generic polynomials of degrees $\max(d_1+\sharp \mathcal{R}_1/2,$ $d_2+\sharp \mathcal{R}_2/2)-\sharp \mathcal{R}_3/2,\; \max(d_1+\sharp \mathcal{R}_1/2,d_2+\sharp \mathcal{R}_2/2)-\sharp \mathcal{R}_4/2$ respectively.
\item For $2d_1+\sharp \mathcal{R}_1,2d_2+\sharp \mathcal{R}_2\leq m$, compute the prime decomposition of $Id_{d_1,d_2}$. Extend the field $\mathbb{L}$ to contain all its solutions
\item For each solution, check if $F'/G$ is linearly independent with the $L_{i,2}'/L_{i,3}$, and if so, add $[\kappa,F,G]$ to $L$.
\end{enumerate}
\item Return $L$
\end{enumerate}

\begin{proof}[Proof of Theorem \ref{thm1}]
We know by construction that any morphism $F=U/V$ will satisfy equations \eqref{eqUV} for some partition choice $\mathcal{R}$. In step $1$, all possible partition are computed. In step $2$, the coefficient field $\mathbb{L}$ is extended to contain all roots of $S$, allowing to define the ideals $Id_{d_1,d_2}$ of step $3$. The degree of $U,V$ should be $\leq m$, and this condition is encoded in step $3b$ by the constraint $2d_1+\sharp \mathcal{R}_1,2d_2+\sharp \mathcal{R}_2\leq m$. By Proposition \ref{propfinite}, we know that this ideal is zero dimensional, thus we can compute one by one its solutions, possibly extending the field $\mathbb{L}$. Step $3c$ check that the found solution is independent from the previous found ones, and if so, adds it to $L$. Thus the elements of $L$ are indeed rational morphisms (as they satisfy equations \eqref{eqUV}), and are independent. No solutions can be missed as all solutions of $Id_{d_1,d_2}$ for all possible $d_1,d_2$ are computed.
\end{proof}

The big drawback of this approach is that the splitting field of $S$ needs to be calculated beforehand. However, \cite{zarhin2000hyperelliptic} proves that Jacobian endomorphisms are trivial when the Galois group is big, which implies that there cannot be many small degree morphisms to isogenus elliptic curves. And thus not many possible j-invariants, and so not many possible $\kappa$. As the number of independent morphisms for the same $\kappa$ is bounded by the genus, if a large field extension was required to write them, then all their conjugates would also give solutions.
This strongly suggests that $S$ with large splitting field will not have any morphisms. The family with a non trivial morphism with the largest Galois group found is the following
$$y^2=P(x^n),\;\; \deg P=3,\; \hbox{Gal}(P)=S_3,\; n\hbox{ odd}$$
in which the Galois group of $P(x^n)$ is generically of size $3(6n)^3$, much smaller than the $(3n)!$ expected.

In practice, we will first apply modular tests before looking for elliptic morphisms, and thus (probably) avoid the worst cases. Also, first predicting the rank allows to cut short the algorithm: once the predicted number is found, we know that there are no more to find and thus we can directly return $L$.

\subsection{Jacobian Zeta Functions}

If there exists an elliptic factor in the Jacobian, we can look at what happens to it when performing a modular reduction. If the morphism can be reduced in $\mathbb{F}_{p^k}$ and the prime $p$ chosen does not divide a denominator nor the discriminant of the curve, then the Jacobian in $\mathbb{F}_{p^k}$ is well defined and will also have an elliptic factor. We now define the Jacobian Zeta function \cite{vercauteren2002computing} by
$$\mathcal{Z}_{p^k}(T)=\exp\left( \sum\limits_{i=1}^\infty \frac{N_i T^i}{i} \right)$$
where $N_i$ is the number of points of $y^2=S(x)$ in $\mathbb{F}_{p^{ki}}$. The Jacobian Zeta function is rational, and can always be written
$$\mathcal{Z}_{p^k}(T)=\frac{\Psi_{p^k}(T)}{(1-p^kT)(1-T)},\;\Psi_{p^k}\in \mathbb{Z}[T]$$
Moreover, $\Psi$ is of degree $2g$, and its roots have modulus $p^{k/2}$. The main property is that if the Jacobian factorize mod $p$, then the $\Psi_{p^k}$ function also factorizes, and its factors are the $\Psi$ functions of the Abelian factors of the Jacobian. As for an elliptic curve the $\Psi$ function will have degree $2$, we can bound the number of factors by the number of degree $2$ factors of $\Psi_{p^k}$. Moreover, the Zeta function of an elliptic curve uniquely determines an elliptic curve up to an isogeny (mod $p$) as two elliptic isogenous curves have the same number of points, thus two isogenous factors will give the same factor of $\Psi_{p^k}$. Some efficient algorithms \cite{kedlaya2004computing} exist to compute this function, but here in our actual implementation in Maple we use a simple counting points method which is sufficient.

To first check the bad $p$'s, if a $p$ divides a denominator of an elliptic morphism, it degenerates, and thus the dimension of the Jacobian reduces. This implies that the genus reduces, and so that a double root in $S$ has appeared. This can be checked using the discriminant. Bounding $k$ is much more delicate, as algebraic extensions do appear in the morphisms. In fact, in algorithm \underline{\textit{EllipticFactors}}, not only the splitting field of $S$ is needed, but also subsequent field extensions can be necessary for the solutions of ideals $\hbox{Id}_{d_1,d_2}$. These, with a growing $m$, can be arbitrary large. It is thus necessary to study the factorization of the $\mathcal{Z}_{p^k}(T)$ for all $k\in\mathbb{N}^*$.\\

\noindent
\underline{\textit{HyperellipticZeta}}\\
\textsf{Input:} A hyperelliptic curve $y^2=S(x)$, a prime $p$.\\
\textsf{Output:} An integer which is an upper bound on the rank of the Jacobian.\\
\begin{enumerate}
\item If $p$ divides the discriminant of $S$, return $\infty$.
\item Compute the Jacobian Zeta function $\mathcal{Z}_p(T)$ and then $\Psi_p(T)$ for the curve $y^2=S(x)$
\item Compute the polynomials
$$P_k=\prod_{i=1}^k \Psi_p(\xi^i T^{1/i})$$
for $k\in\mathbb{N}^*,\; \phi(k) \leq \deg S -1$.
\item For each possible $i$, factorize $P_i$ in $\mathbb{Q}[T]$, compute the number of factors of degree $\leq 2$, counting those of degree $1$ as $1/2$.
\item Return the maximum of these numbers. 
\end{enumerate}

\begin{proof}[Proof of Theorem \ref{thm2}]
Let us first remark that
$$\mathcal{Z}_{p^k}(T^k)=\exp\left( \sum\limits_{i=1}^\infty \frac{N_{ik} T^{ik}}{i} \right)=\prod\limits_{j=1}^k \exp\left( \sum\limits_{i=1}^\infty \frac{N_i (\xi^j T)^i}{ik} \right)$$
with $\xi$ a $k$ th root of unity. This gives the relation
$$\frac{\Psi_{p_k}(T^k)}{(1-T^k)(1-p^kT^k)}=\prod_{j=1}^k \frac{\Psi_p(\xi^j T)}{(1-\xi^j T)(1-p\xi^j T)}$$
Thus $\Psi_{p_k}(T^k)=\prod_{j=1}^k \Psi_p(\xi^j T)$ , and so step $3$ computes $P_k=\Psi_{p^k}(T)$.

Let us now consider an irreducible factor of $\Psi_p(T)$. The Galois group acts transitively on its roots $\alpha_1,\dots,\alpha_m$. The transformation
$$P(T) \rightarrow \prod_{j=1}^k P(\xi^j T^{1/k})$$
is multiplicative, and thus this factor will also appear as a factor of $\Psi_{p^k}(T)$. The roots of this factor will be $\alpha_1^k,\dots,\alpha_m^k$. As long as they are distinguishable, the Galois group acts transitively on these roots, and thus the minimal polynomial vanishing on them is still irreducible. But they can become undistinguishable if some of them are equal. This happens if $\alpha_i=\xi \alpha_j$ for some $\xi$ $k$th root of unity. We consider the minimal $k$ for which this happens, and thus $\xi$ will be a primitive $k$-root of unity. So, if we have such relation, we will also obtain $\phi(k)$ other conjugate relations, and thus we need at least $\phi(k)$ roots. As $\Psi_p(T)$ has degree $2g=\deg S-1$, we know that such relation can only occur for the first time when $\phi(k) \leq \deg S -1$ (this can occur for all the multiples of these $k$, but such relation between roots would have already been detected).

Thus new factorizations can only occur for $\Psi_{p^k}(T)$ with $\phi(k) \leq \deg S -1$, and so it is enough to consider only them. Now knowing that all roots have modulus $p^{k/2}$, this means that we can write
$$\Psi_{p^k}(T)=\prod_{j=1}^{g} (T^2+a_jT+p^k),\quad a_j\in\overline{\mathbb{Q}}$$
If $\Psi_{p^k}(T)$ has a factor of degree $1$, so a root in $\mathbb{Q}$, it vanishes one of the quadratic factors, for which then $a_j\in\mathbb{Q}$, and so automatically $\Psi_{p^k}(T)$ has another factor of degree $1$. So a factor of degree $1$ is always a a factor of a factor of degree $2$ of the form $T^2+a_jT+p^k$. So counting degree $1$ factors for $1/2$ still ensures that the answer of step $4$ will be an integer, and correctly counts the number of rational $a_j$. The step $5$ then consider the maximum of these numbers, as we do not know a priori which $k$ we should choose.
\end{proof}

\noindent
\textbf{Example}: We consider the hyperelliptic curves $y^2=x^n-1$. For $n$ odd, this already satisfies our assumption. For $n$ even, we can make the transformation $x \leftarrow x^{-1}+1$, giving an isomorphic curve $y^2=x^n((x^{-1}+1)^n-1)$ of odd degree. Below we compute the rank, morphisms degree, a certifying prime, and the timings to find the elliptic factors and certify the rank.
\begin{center}

\begin{tabular}{|c|c|c|c|c|}\hline
$n$& Rank & degree & prime & Time   \\\hline
5  & 0    &        &  11   & $0.2s$ \\\hline
6  & 2    &  $2,2$ &       & $0.7s$ \\\hline
7  & 0    &        &  29   & $63s$  \\\hline
8  & 3    & $2,2,2$&       & $16.3s$\\\hline
9  & 1    & $3$    &  19   & $425s$ \\\hline
10 & 0    &        &  11   & $45s$  \\\hline
11 & 0    &        &  23   & $>1000s$  \\\hline
12 & 5    &$3,4,4,4,4$&    & $557s$ \\\hline
\end{tabular}\\
\end{center}

Using these morphisms, we can for example compute the genus $5$ hyperelliptic integral
$$\int \frac{(x^3+x^2+x+1)dx}{\sqrt{1-x^{12}}}=\frac{1+i}{6} \mathcal{F}\left(\left. \frac{-i(x^3-1)}{x^3+1}\right| -1\right)$$
$$+\frac{3^{1/4}\sqrt{2}((1-i)+(1+i)\sqrt{3})}{24}\mathcal{F}\left(\left. \frac{-i\sqrt{3}x^2}{x^4-1}\right| -1\right)$$
$$-\frac{\sqrt{\sqrt{3}i+3}\sqrt{2}(-1+\sqrt{3}i)}{24} \mathcal{F}\left(\left. \frac{-4ix^2\sqrt{3}}{(x^2-1)^2\sqrt{3}i+x^4-1}\right| 2\right)$$
$$-\frac{\sqrt{\sqrt{3}i+3}\sqrt{2}(-3+\sqrt{3}i)}{48} \mathcal{F}\left(\left. \frac{-3+i\sqrt{3}i}{2x^4-2}\right| \frac{1}{2}-i\frac{\sqrt{3}}{2}\right)$$
$$-\frac{\sqrt{\sqrt{3}i-3}\sqrt{2}(3+\sqrt{3}i)}{48} \mathcal{F}\left(\left. \frac{(\sqrt{3}i+3)x^4}{2x^4-2}\right| \frac{1}{2}+i\frac{\sqrt{3}}{2}\right)$$

\section{Decomposition of Hyperelliptic Integrals}

\subsection{Hermite Reduction}

The decomposition happens in three parts. First we will remove the multiple poles of $\int P/(Q\sqrt{S}) dx$. The novelty is that we also have to our disposal elliptic integral of the second kind, which after composition, will provide new meromorphic function in addition to the classical algebraic functions.\\

\noindent
\underline{\textit{HermiteReduction}}\\
\textsf{Input:} A hyperelliptic integral $I=\int P/(Q\sqrt{S}) dx$, and a complete list $L$ of independent elliptic morphisms..\\
\textsf{Output:} An integrand $J$ with simple poles and a function $H$ such that $I'-H'=J$, or FAIL.\\
\begin{enumerate}
\item Note $[\kappa_i,F_i,G_i]=L_i,\; i=1\dots \sharp L$
\item Note $\tilde{Q}$ the lcm of $Q$ and the denominators of $F_i$, and $\hat{Q}$ the unitary polynomial whose factors are those of $\tilde{Q}$ with multiplicity one less.
\item Compute the partial fraction decomposition of
$$\tilde{J}=\sum_{i=1}^{\sharp L} a_i \frac{F_iF_i'}{G_i} +\sqrt{S(x)} \left( \frac{\sum_{i=1}^\ell b_i x^i}{\hat{Q}(x)\sqrt{S(x)}} \right)' -\frac{P}{Q}  $$
with $\ell=\deg \hat{Q} +\max (0,\deg \hbox{num}(F_i)-\deg \hbox{den}(F_i))$.
\item Compose a system $Sys$ of equations consisting of the $x$ coefficients of the partial fraction decomposition whose poles are either of order $\geq 2$ or all poles which are roots of $S$ (including the pole at infinity.
\item Solve $Sys$. If no solution, return FAIL, else substitute and return
$$H=\sum_{i=1}^{\sharp L}  \int^{F_i(x)}\!\!\!\!\!\!\!\!\frac{a_i zdz}{\sqrt{z(z-1)(z-\kappa_i)}} + \frac{\sum_{i=1}^\ell b_i x^i}{Q(x)\sqrt{S(x)}},\;\; J=\frac{\tilde{J}}{\tilde{Q}\sqrt{S(x)}}$$
\end{enumerate}

\begin{prop}\label{propherm}
Algorithm \underline{\textit{HermiteReduction}} always terminate and returns a correct answer. If $\sharp L$ equals the rank of the Jacobian and the genus, the algorithm never fails.
\end{prop}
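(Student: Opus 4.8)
The plan is to verify termination and correctness separately, and then argue non-failure under the rank/genus hypothesis by a dimension-counting argument on the space of meromorphic differentials on the Jacobian. For termination, I would simply observe that the algorithm performs finitely many symbolic operations: partial fraction decomposition, assembling a linear system $Sys$ whose unknowns are the $a_i$ and the $b_i$ (there are $\sharp L + \ell + 1$ of them), and solving it — all of these terminate, and $\ell$ is finite since the $F_i$ have bounded degree. For correctness, the key point is that whenever $Sys$ has a solution, the returned pair $(H,J)$ genuinely satisfies $I' - H' = J$: this is essentially by construction, since $\tilde J/(\tilde Q\sqrt S)$ is defined so that $H' = \bigl(\sum a_i F_iF_i'/G_i + (\sum b_i x^i/(\hat Q\sqrt S))' \sqrt S + P/Q\bigr)/(\tilde Q \sqrt S)$... wait, more precisely one checks that $H'$ equals $P/(Q\sqrt S) - J$ by using $\mathcal{E}(F_i\mid\kappa_i)' = F_iF_i'/(G_i\sqrt S)$ and that differentiating the algebraic part $\sum b_i x^i/(Q\sqrt S)$ recovers the term built from $\hat Q$. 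Finally one must check that $J$ really has only simple poles: the equations put into $Sys$ are exactly the coefficients of all pole parts of order $\geq 2$ together with all pole parts (of any order) supported on roots of $S$ and at infinity; solving $Sys$ kills precisely these, and the remaining poles of $J$ are simple poles away from the branch locus, as claimed for the output.

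The substantive part is the non-failure claim. I would argue as follows. The differential $\omega = P\,dx/(Q\sqrt S)$ is a meromorphic differential on the curve $\mathcal C$, hence descends to a meromorphic differential on the Jacobian $\mathcal J$ via the Abel–Jacobi map. When $\sharp L$ equals both the rank and the genus, the morphisms $(F_i, yG_i)$ express $\mathcal J$ as isogenous to $\mathcal E_1\times\cdots\times\mathcal E_g$, so by Proposition~\ref{prop1} the pullbacks of the elliptic differentials of the first and second kind span the full space of meromorphic differentials modulo exact ones — that is, modulo differentials of rational (algebraic) functions on $\mathcal C$. Concretely: on each elliptic factor $\mathcal E_i$, the differentials $dz/(y)$ and $z\,dz/(y)$ together with $d(\text{rational})$ span everything; pulling back, $F_i'\,dx/(G_i\sqrt S)$, $F_iF_i'\,dx/(G_i\sqrt S)$, and the derivatives of algebraic functions span the space of all meromorphic differentials on $\mathcal C$. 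The Hermite-reduction ansatz in step~3 is built to realize exactly the "second kind plus algebraic" part of this decomposition: the $\sum a_i F_iF_i'/G_i$ term supplies the elliptic second-kind contributions, and the $(\sum b_i x^i/(\hat Q\sqrt S))'$ term supplies the exact (algebraic) part needed to cancel all higher-order poles and all poles on the branch locus. So the cohomological obstruction to removing those poles vanishes, which is precisely the statement that $Sys$ is solvable.

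The main obstacle, and the step I would spend the most care on, is showing that the ansatz in step~3 is rich enough — i.e. that the choice $\ell = \deg\hat Q + \max_i(0,\deg\mathrm{num}(F_i)-\deg\mathrm{den}(F_i))$ and the particular form $\sum b_i x^i/(\hat Q\sqrt S)$ actually span the whole relevant space of algebraic differentials with the allowed pole structure, rather than merely a subspace. This is the usual delicate point in Hermite reduction: one must check that the numerator degree $\ell$ is large enough to account for poles introduced at infinity by the $F_i$ (hence the $\max(0,\cdot)$ correction) and that using $\hat Q$ (each factor of $\tilde Q$ with multiplicity one less) as the denominator is exactly what is needed so that differentiating lowers all multiplicities by one. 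I would prove this by a local analysis at each pole: at a root of $Q$ (or of a denominator of some $F_i$) of multiplicity $\nu$, the derivative of $x^k/(\hat Q\sqrt S)$ has a pole of order $\nu+1/2$ with a free leading coefficient, so the $b_i$ can be chosen to match the principal part of $P/(Q\sqrt S) - \sum a_i F_iF_i'/(G_i\sqrt S)$ down to order $3/2$; what is left is a simple pole, as required. Combined with the spanning statement from the Jacobian-decomposition argument — which guarantees the residual simple-pole differential together with the first-kind pieces accounts for $\omega$ — this shows $Sys$ always has a solution, so the algorithm never fails.
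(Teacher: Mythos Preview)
Your overall strategy matches the paper's: termination is trivial, correctness follows by construction, and non-failure in the completely decomposable case comes down to showing that the pullbacks of elliptic second-kind integrals, together with exact differentials of the shape in the ansatz, suffice to kill all higher-order poles. Your discussion of why the denominator $\hat Q$ and the numerator bound $\ell$ are adequate is in line with (and somewhat more detailed than) the paper's.

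The genuine gap is in your appeal to Proposition~\ref{prop1}. That proposition only says that $g$ independent morphisms exist if and only if the Jacobian has $g$ elliptic factors; it says nothing about second-kind differentials. You assert that the pullbacks $F_iF_i'\,dx/(G_i\sqrt S)$ ``span the full space of meromorphic differentials modulo exact ones,'' but this is precisely the point requiring proof: the independence hypothesis concerns only the first-kind pullbacks $F_i'/G_i$, and it is not automatic that the second-kind pullbacks $\mathcal E(F_i(x)\mid\kappa_i)$ are linearly independent modulo rational functions on $\mathcal C$. The paper isolates this as a separate lemma (Lemma~\ref{lem2}) and proves it by a period/monodromy argument: the induced map on cycle lattices $\mathcal L:\mathbb Z^{2g}\to\mathbb Z^{2r}$ must be surjective (otherwise the period lattice of the Jacobian would have rank strictly less than $2g$), and surjectivity lets one choose, for each index $i_0$, a cycle on $\mathcal C$ mapping to a nontrivial cycle on $\mathcal E_{i_0}$ alone, so that any linear relation $\sum a_i\,\mathcal E(F_i\mid\kappa_i)\in\mathbb C(x,\sqrt{S})$ forces every $a_{i_0}=0$. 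Once this independence is established, a dimension count (there are exactly $g$ independent hyperelliptic integrals of the second kind) gives the spanning you need. Your cohomological intuition---that an isogeny $\mathcal J\sim\prod\mathcal E_i$ induces an isomorphism on $H^1_{dR}$---is a legitimate alternative route, but you would have to invoke it explicitly and verify that the particular morphisms in $L$ realize the cohomological projections, rather than citing Proposition~\ref{prop1} for a statement it does not contain.
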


\begin{lem}\label{lem2}
If $(\kappa_i,F_i,G_i)_{i=1\dots r}$ define $r$ independent morphisms, then the elliptic integrals of the second kind $\mathcal{E}(F_i(x) \mid \kappa_i)$ are linearly independent over $\mathbb{C}$ modulo addition of a rational function over the hyperelliptic curve.
\end{lem}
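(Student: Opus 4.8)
The plan is to argue by contradiction, exploiting the pole structure of the second-kind elliptic integrals after composition. Suppose there is a nontrivial relation $\sum_{i=1}^r c_i \mathcal{E}(F_i(x)\mid\kappa_i) = \rho(x,y)$ with $c_i\in\mathbb{C}$ not all zero and $\rho$ a rational function on the hyperelliptic curve $\mathcal{C}:y^2=S(x)$. Differentiating in $x$ gives
$$\sum_{i=1}^r c_i \frac{F_i(x)F_i'(x)}{G_i(x)\sqrt{S(x)}} = \rho'(x,y),$$
so the left-hand side is the derivative of a rational function on $\mathcal{C}$. The first step is to understand the differential $\omega:=\big(\sum_i c_i F_iF_i'/G_i\big)\,dx/\sqrt{S}$ on $\mathcal{C}$: I claim it is a differential of the second kind (no residues) with poles only at the ramification points and at infinity. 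Indeed, each summand is the pullback under the morphism $(F_i,yG_i)$ of the second-kind differential $z\,dz/\sqrt{z(z-1)(z-\kappa_i)}$ on the elliptic curve $\mathcal{E}_i$; by Proposition~\ref{prop1} this pullback is again of the second kind, since the morphism can only move the pole at infinity of $\mathcal{E}_i$ to ramification/infinity points of $\mathcal{C}$ and the residue-free property is preserved by pullback.

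The key step is then: a second-kind differential that is exact (i.e.\ $= d\rho$ for rational $\rho$) must be zero in cohomology, hence the class $\sum_i c_i[\omega_i]$ vanishes in the de Rham cohomology $H^1_{dR}(\mathcal{C})$, where $\omega_i$ denotes the pullback of the second-kind differential on $\mathcal{E}_i$. Now I would use the compatibility of the morphisms with the Abel--Jacobi picture set up in the proof of Proposition~\ref{prop1}: the $F_i'/G_i$ are, up to the linear change of coordinates performed there, the first $r$ of a basis $x^{j-1}$ of holomorphic differentials, reflecting that the induced maps $\mathcal{J}\to\mathcal{E}_i$ are independent and realize the splitting $\mathcal{J}\sim\mathcal{E}_1\times\cdots\times\mathcal{E}_r\times(\text{rest})$. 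Under this splitting, the pulled-back second-kind classes $[\omega_i]$ live in the distinct summands $H^1_{dR}(\mathcal{E}_i)\hookrightarrow H^1_{dR}(\mathcal{C})$, so a vanishing combination $\sum_i c_i[\omega_i]=0$ forces each $c_i[\omega_i]=0$; but $[\omega_i]\neq 0$ in $H^1_{dR}(\mathcal{E}_i)$ because the second-kind elliptic integral $\mathcal{E}(z\mid\kappa_i)$ is itself not a rational function (it has a genuine pole at infinity and nontrivial periods, as recalled in the introduction). Hence all $c_i=0$, a contradiction, and the $\mathcal{E}(F_i(x)\mid\kappa_i)$ are linearly independent modulo rational functions on $\mathcal{C}$.

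I expect the main obstacle to be making precise the claim that the pulled-back classes $[\omega_i]$ occupy \emph{independent} subspaces of $H^1_{dR}(\mathcal{C})$ — i.e.\ transferring the independence of the morphisms (stated as independence of the polynomials $F_i'/G_i$ over $\mathbb{C}$) into independence of the corresponding second-kind cohomology classes. One clean way is to pair against cycles: the periods of $\omega_i$ over a homology basis of $\mathcal{C}$ factor through the induced map $H_1(\mathcal{C})\to H_1(\mathcal{E}_i)$, and independence of the morphisms means these maps are jointly surjective onto $\prod_i H_1(\mathcal{E}_i)$ up to finite index, so the period matrix of the $\omega_i$ has full rank $r$; an exact combination would have all periods zero, forcing the combination trivial. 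Alternatively, one can stay entirely at the level of differentials: if $\sum_i c_iF_iF_i'/G_i$ times $dx/\sqrt S$ were exact, Hermite-type reduction against the holomorphic part would express $\sum_i c_i F_i'/G_i$ as a $\mathbb{C}$-linear combination forced to vanish by the assumed independence of the $F_i'/G_i$, pushing the relation down to the polynomial level where the contradiction is immediate. Either route reduces the lemma to the already-granted facts that the $F_i'/G_i$ are independent and that a single $\mathcal{E}(F_i\mid\kappa_i)$ is non-rational.
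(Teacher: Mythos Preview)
Your cycle/period approach is essentially the paper's argument: both reduce the question to showing that the induced map $\mathcal{L}:H_1(\mathcal{C},\mathbb{Z})\to\prod_i H_1(\mathcal{E}_i,\mathbb{Z})$ is surjective (up to finite index), so that for each $i_0$ one can pick a cycle on $\mathcal{C}$ whose image is nontrivial on $\mathcal{E}_{i_0}$ and trivial on the other factors; integrating the putative relation along such a cycle isolates the $i_0$-th term, and since the second-kind elliptic period is nonzero while a rational function has period zero, one concludes $c_{i_0}=0$.

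The gap in your write-up is precisely the surjectivity of $\mathcal{L}$, which you assert as a consequence of ``independence of the morphisms'' without justification. The paper supplies this by a lattice-rank count: in the Abel--Jacobi coordinates $(t_1,\dots,t_g)$ adapted to the morphisms, the period lattice of the Jacobian must have rank $2g$; the first $r$ coordinates receive their periods through $\mathcal{L}$, while the remaining $g-r$ coordinates contribute at most $2(g-r)$ independent periods, so if $\operatorname{rank}\mathcal{L}=r'<2r$ the total rank would be at most $r'+2(g-r)<2g$, a contradiction. This is the step you should make explicit rather than absorb into a reference to Proposition~\ref{prop1}.

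Two smaller points. First, your claim that the pullbacks $\omega_i$ have poles only at ramification points or infinity of $\mathcal{C}$ is false: the preimage of $\infty\in\mathcal{E}_i$ under $F_i=U_i/V_i$ consists of the zeros of $V_i$, and by equations~\eqref{eqUV} these can include double roots coming from $B^2$ at non-Weierstrass abscissae; the $\omega_i$ acquire genuine poles there. This does not damage the argument (you only need residue-freeness, which \emph{is} preserved by pullback), but the sentence as written is incorrect. Second, your Hermite-reduction alternative is not a proof: reducing $\sum_i c_i F_iF_i'/G_i$ does not canonically produce the combination $\sum_i c_i F_i'/G_i$, so the assumed independence of the first-kind polynomials $F_i'/G_i$ gives no direct grip on the second-kind classes. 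Transferring first-kind independence to second-kind independence is exactly the content of the lemma, and the only route in your proposal that actually accomplishes it is the period argument.
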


\begin{proof}[Proof of Lemma \ref{lem2}]
Consider a basis of cycle on the hyperelliptic curve, representing any cycle by a vector of integers in $\mathbb{Z}^{2g}$. Now the morphisms $(F_i,G_i)$ send such cycle to cycles on elliptic curves $y^2=x(x-1)(x-\kappa_i)$. Again, considering basis of cycles on these elliptic curves, we can represent them by vectors in $\mathbb{Z}^2$. This defines $L_i:\mathbb{Z}^{2g} \rightarrow \mathbb{Z}^2$ a linear application. We now look at the Abel Jacobi map
$$\sum\limits_{i=1}^g \int^{x_i} \frac{F_j'(x)}{G_j(x)\sqrt{S(x)}} dx = t_j,\; j=1\dots r$$
$$\sum\limits_{i=1}^g \int^{x_i} \frac{x^{j-1}}{\sqrt{S(x)}} dx = t_j,\; j=r+1\dots g$$
Considering cycles $\gamma_1,\dots,\gamma_g$ and their representation $X_1,\dots,X_g \in\mathbb{Z}^{2g}$, acting the multivaluation of the first $r$ integrals of the left hand side along these cycles gives
$$t_j \rightarrow t_j + \omega_j L_j\left(\sum_{i=1}^g X_i \right),\;\; j=1\dots r$$
where $\omega_j$ are the period lattice of the elliptic integrals. As the Jacobian is $g$ dimensional, the periods form $2g$ dimensional lattice. If the linear application $\mathcal{L}:(L_1,\dots,L_r): \mathbb{Z}^{2g} \rightarrow \mathbb{Z}^{2r}$ has rank $r'<2r$, then the period lattice is of dimension at most $r'+2g-2r$ (as there can be at most $2$ periods for each $\mathbb{C}$ dimension of the complementary space). This implies that $r'=2r$ and thus that $\mathcal{L}$ is surjective.

Now assume there exists a linear combination $\sum_{i=1}^r a_i\mathcal{E}(F_i(x) \mid \kappa_i)$ equalling to a rational function. As $\mathcal{L}$ is surjective, for any $i_0$, we can find a cycle $\gamma$ such that $\mathcal{L}(\gamma)= (0,\dots,0,1,0,\dots,0)$ with $1$ in the $2i_0$ th position. Computing the linear combination of integrals along this cycle gives zero to all terms except the $i$ th one. As it should be equal to a rational function (thus single valued), this implies that $a_{i_0}=0$. Thus all the $a_i=0$, and so the integrals are independent.
\end{proof}

\begin{proof}[Proof of Proposition \ref{propherm}]
The algorithm always terminates as it executes a fixed number of steps. If the rank equals the genus, then the $g$ independent morphisms $(\kappa_i,F_i,G_i)_{i=1\dots g}$ will generate $g$ independent elliptic integrals of the second kind $\mathcal{E}(F_i(x) \mid \kappa_i)$ according to Lemma \ref{lem2}. Over a hyperelliptic curve of genus $g$, there are $g$ independent hyperelliptic integrals of the second kind, which matches exactly those we have as pull backs of elliptic integrals of the second kind. Thus all of them can be written as a linear combination of the $\mathcal{E}(F_i(x) \mid \kappa_i)$. Thus there exists a linear combination of the form
$$H(x)= R(x)+\sum\limits_{i=1}^{\sharp L} a_i \mathcal{E}(F_i(x) \mid \kappa_i),\quad R\in \mathbb{C}(x,\sqrt{S(x)})$$
such that $H'-P/(Q\sqrt{S})$ has only simple poles.

Let us look at which multiple poles we can have in $R$. There are the poles of $Q$, and the poles of the $F_i$, which appear in the derivative of $\mathcal{E}(F_i(x) \mid \kappa_i)$. The the algebraic part can have poles of order at most one less, and thus its denominator divides the polynomial $\hat{Q}$ computed in step $2$. In step $3$, we differentiate a general linear combination of the $\mathcal{E}(F_i(x) \mid \kappa_i)$ and $(\sum_{i=1}^\ell b_i x^i)/(\hat{Q}(x)\sqrt{S(x)})$. Looking at the pole at infinity, it is enough to take for $\ell=\deg \hat{Q} +\max (0,\deg \hbox{num}(F_i)-\deg \hbox{den}(F_i))$.
Differentiating this and subtracting the integrand should thus have simple poles. This constraint is written as a linear system in step $4$, and solved in step $5$. If a solution is found, then the corresponding integral is returned.
\end{proof}

If the Jacobian is not totally decomposable, then the integrals $\mathcal{E}(F_i(x) \mid \kappa_i)$ may not be sufficient to remove all the multiple poles, but it could still work for some integrals $I$, allowing to perform the next reduction steps.

\subsection{The Elliptic Divisor Variety}

In this subsection, we will compute explicitly a parametrization of the divisors with at most $g$ points on the hyperelliptic curve by $g$ points $(c_i,d_i)$ on elliptic curves.\\

\noindent
\underline{\textit{EllipticDivisors}}\\
\textsf{Input:} A hyperelliptic curve $y^2=S(x)$, a complete list $L$ of independent elliptic morphisms.\\
\textsf{Output:} A polynomial $R$ and an integrand $P/(Q\sqrt{S})$ with at most $g$ simple poles abscissa, smooth at infinity, and with residues $\pm 1$.\\
\begin{enumerate}
\item Note $[\kappa_i,F_i,G_i]=L_i,\; i=1\dots \sharp L$, and $D=[\hbox{num}(F_i-c_i),G_i]_{i=1\dots \sharp L}$
\item Note
$$R=\sum\limits_{i=0}^{\lceil m/2\rceil} a_i x^i +y\sum\limits_{i=0}^{\lfloor (m-\deg S)/2 \rfloor} b_i x^i$$
with $m=g+\sum\limits_{i=1}^{\sharp L} \max(\deg \hbox{num}(F_i),\deg \hbox{den}(F_i))$.
\item Consider the linear system $S$ given by the coefficients in $x$ of
$$\hbox{num}(R(x,d_i/G_i(x)))=0 \hbox{ mod } \hbox{num}(F_i-c_i),\; i=1\dots \sharp L$$
\item Solve $S$ in $a,b$, and substitute a solution in $R$
\item Return $R$ and the simplified $x$ derivative of
\begin{equation}\label{eqpi2}
\sum\limits_{i=1}^{\sharp L} \Pi(F_i(x),c_i \mid \kappa_i)-\left(\ln R\left(x,\sqrt{S(x)}\right)-\ln R\left(x,-\sqrt{S(x)}\right)\right)
\end{equation}
\end{enumerate}

\begin{prop}
Algorithm \underline{\textit{EllipticDivisors}} is correct.
\end{prop}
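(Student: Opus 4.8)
The plan is to establish correctness in two parts: that the linear system assembled in step $3$ always admits a usable solution, and that the polynomial $R$ returned together with the simplified $x$-derivative of \eqref{eqpi2} has exactly the advertised properties — at most $g$ simple pole abscissas, smoothness at infinity, and all residues $\pm 1$. Termination is immediate, since the algorithm executes a fixed list of steps whose only nontrivial one is solving a single linear system, so everything reduces to these two points.

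For solvability I would argue by a dimension count, using $\deg S = 2g+1$ and the value $m = g + \sum_{i=1}^{\sharp L}\max(\deg\mathrm{num}(F_i),\deg\mathrm{den}(F_i))$ prescribed in step $2$. The ansatz $R = A(x) + yB(x)$ of step $2$ has $\lceil m/2\rceil + \lfloor (m-\deg S)/2\rfloor + 2$ free coefficients. The requirement of step $3$ that $\mathrm{num}(R(x,d_i/G_i(x))) = A(x)G_i(x) + d_i B(x)$ be divisible by $\mathrm{num}(F_i - c_i) = \mathrm{num}(F_i) - c_i\,\mathrm{den}(F_i)$ is, with the $c_i$ kept as indeterminates (so no leading-term cancellation), exactly $\max(\deg\mathrm{num}(F_i),\deg\mathrm{den}(F_i))$ linear conditions on the coefficients of $R$; summing over $i$ gives $m-g$ conditions. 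Hence the solution space has dimension at least $\lceil m/2\rceil + \lfloor (m-\deg S)/2\rfloor + 2 - (m-g)$, which a short computation shows to be $\geq 1$, so a nonzero $R$ always exists; moreover for generic parameters the system has the expected rank $m-g$, which is what pins down the pole count below.

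For the output I would argue geometrically on $\mathcal{C}: y^2 = S(x)$, using the relation $d_i^2 = c_i(c_i-1)(c_i-\kappa_i)$ (so that $(c_i,d_i)$ is a point of $\mathcal{E}_i$) and its consistency with $S(x_0)G_i(x_0)^2 = c_i(c_i-1)(c_i-\kappa_i)$ whenever $F_i(x_0) = c_i$. The polynomial $R = A(x)+yB(x)$ is regular away from $\infty$, and its zero divisor lies over the roots of the norm $N(x) = A(x)^2 - S(x)B(x)^2 = R(x,\sqrt S)R(x,-\sqrt S)$, one point over each root on the sheet where the matching factor vanishes; the degrees chosen in step $2$ make $\deg N \leq m$ up to a harmless leading-coefficient normalization, so $R$ has about $m$ zeros on $\mathcal{C}$. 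By step $3$, $R$ vanishes at each of the $\sum_i\deg\mathrm{num}(F_i-c_i) = m - g$ points $(x_0,y_0)$ with $F_i(x_0) = c_i$ and $y_0 G_i(x_0) = d_i$, leaving at most $g$ zeros of $R$ (a zero and its hyperelliptic conjugate counting as one abscissa) unforced. Now differentiate \eqref{eqpi2}: the term $\Pi(F_i(x),c_i\mid\kappa_i)$ has derivative $d_i F_i'(x)/((F_i(x)-c_i)G_i(x)\sqrt{S(x)}) = d_i(F_i'/G_i)/((F_i-c_i)\sqrt S)$, where $F_i'/G_i$ is a polynomial of degree $\leq g-1$ by the proof of Proposition~\ref{prop1}, so it contributes only simple poles, all at abscissas where $F_i(x)=c_i$, with residue $d_i/(G_i(x_0)\sqrt{S(x_0)}) = \pm 1$ according to the sheet, and it is holomorphic at $\infty$; and $\ln R(x,\sqrt S) - \ln R(x,-\sqrt S)$ has derivative equal to the logarithmic derivative of $R(x,y)/R(x,-y)$, which has simple poles with residue $+1$ at the zeros of $R$, residue $-1$ at the zeros of $R(x,-y)$, and is holomorphic at $\infty$ because $R(x,y)/R(x,-y)$ tends there to a nonzero constant. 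The divisibility conditions of step $3$ are precisely what makes the pole of the $\Pi$-sum at each of the $m-g$ forced points cancel against the matching pole of $\mathrm{d}\ln(R(x,y)/R(x,-y))$; what survives is supported on the $\leq g$ unforced zeros of $R$ and their conjugates, each simple with residue $\pm 1$, and being a rational multiple of $1/\sqrt S$ it is an integrand $P/(Q\sqrt S)$ with at most $g$ simple pole abscissas, residues $\pm 1$, and no pole at $\infty$ — exactly the claimed output, whose associated degree-$\leq g$ divisor encodes the point $((c_1,d_1),\dots,(c_{\sharp L},d_{\sharp L}))$ of $\mathcal{E}_1\times\cdots\times\mathcal{E}_{\sharp L}$ under the morphisms, giving the parametrization announced at the start of the subsection.

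The main obstacle is the residue bookkeeping at the forced points: one must verify that the sign of the residue $d_i/(G_i(x_0)\sqrt{S(x_0)})$ of the $\Pi$-term genuinely matches the residue contributed by $R$ at the same point — this is where the sign conventions $\epsilon_i = \pm 1$ flagged after Proposition~\ref{prop1} enter, through the choice between $d_i/G_i(x_0) = +\sqrt{S(x_0)}$ and $-\sqrt{S(x_0)}$ — so that the poles cancel exactly rather than reinforce. A second, genuinely necessary point is to exclude the degenerate solutions of the step-$3$ system ($R$ sharing a factor with $R(x,-y)$, $R$ with a multiple zero, or an unforced zero colliding with a forced one, with a branch point, or with $\infty$); ruling these out is what forces the argument to treat the parameters $(c_i,d_i)$ — and hence $R$ and the output divisor — as a generic point of the variety being parametrized, and it is also where one must be slightly careful with the leading-coefficient degrees in step $2$ to land on the bound $g$ rather than $g+1$.
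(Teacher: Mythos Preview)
Your argument is correct and follows the same overall architecture as the paper's proof: identify the $m-g$ pole abscissas contributed by the $\Pi$-sum, produce $R$ absorbing them, and check that only $\leq g$ simple poles with residues $\pm 1$ survive in the derivative of \eqref{eqpi2}, with smoothness at infinity. The one substantive difference is in how you justify solvability of the step-$3$ system. The paper argues geometrically: the $m-g$ points with residue $+1$ form a divisor which, by the standard reduction of divisors on a genus-$g$ curve, becomes principal after adding $g$ suitable points; the function realizing this principal divisor is $R$, and the ansatz of step~$2$ is exactly the Riemann--Roch space $L(m\cdot\infty)$ on a hyperelliptic curve. You instead do a direct linear-algebra dimension count, comparing the $\lceil m/2\rceil + \lfloor (m-\deg S)/2\rfloor + 2$ coefficients of $R$ against the $m-g$ linear conditions. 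Both are valid; the paper's route is shorter but imports Riemann--Roch, while yours is self-contained and makes explicit why the degree choice $m$ in step~$2$ is the right one. Your residue bookkeeping and your explicit flagging of the genericity issues (sign matching via $\epsilon_i$, degenerate collisions) are in fact more careful than the paper, which simply asserts cancellation and invokes ``generic $c_i$''.
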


\begin{proof}
The derivatives of the elliptic integrals of the third kind $\Pi(F_i(x),c_i \mid \kappa_i)$ will have poles at the solutions of $F_i(x)=c_i$. For a generic $c_i$, there will be $\max(\deg \hbox{num}(F_i),\deg \hbox{den}(F_i))$ distinct simple roots. Thus the sum
\begin{equation}\label{eqPi}
\sum\limits_{i=1}^{\sharp L} \Pi(F_i(x),c_i \mid \kappa_i)
\end{equation}
will have for generic $c_i$'s $\sum\limits_{i=1}^{\sharp L} \max(\deg \hbox{num}(F_i),\deg \hbox{den}(F_i))$ distinct poles abscissa (all having residues $\pm 1$). Considering only the ones with residue $+1$ (which depend on the valuation choice for the square root), this defines a divisor with this number of points. Now we know that any divisor can be reduced up to a principal divisor to a divisor with $g$ points. Thus adding to this divisor $g$ (suitably chosen) points will give a principal divisor. In step $2$, we consider $R$ a function on the hyperelliptic curve with this number of roots, $m$.

In step $3$, we build the equations requiring that $R$ vanishes on the solutions of the $F_i(x)=c_i$, in step $4$ we solve this system (which then always has a solution), and substitute in $R$. Now $R(x,\sqrt{S(x)})$ has the same poles as the sum \eqref{eqPi} with residues $+1$, and the conjugate has thus the same poles as the sum \eqref{eqPi} with residues $-1$. Thus the derivative of \eqref{eqpi2} will not have any poles as these points. Stays the $g$ additional roots of $R$, which give $g$ poles in the derivative of \eqref{eqpi2}, with residues $\pm 1$. The result is smooth at infinity as infinity is a branch point, and thus cannot have a residue. As all singularities in \eqref{eqpi2} are logarithmic, they cannot be at infinity, and the poles of the derivative are thus by construction simple.
\end{proof}

If the Jacobian is completely decomposable, the Abel Jacobi inversion theorem then ensures that this map is surjective: for any integrand $P/(Q\sqrt{S})$ with at most $g$ simple poles, smooth at infinity, and with residues $\pm 1$, there exists constants $(c_i,d_i)$ such that the integral can be written \eqref{eqpi2}.\\

\noindent
\textbf{Example}: We consiser the curve $y^2=4x^5-10x^4-4x^3+9x^2+6x+1$, which has two independent elliptic morphisms
$$\left(\frac{1}{2},\frac{x^2}{2x+1},\frac{x}{\sqrt{2}(2x+1)^2}\right),\quad \left(\frac{3}{4},\frac{(x+1)^2}{4x+2},-\frac{(x+1)}{4(2x+1)^2}\right)$$
The denominator of the $x$ derivative of \eqref{eqpi2} is
$$\left(4d_2d_1\sqrt{2}+2c_1^2c_2+4c_1c_2^2-10c_2c_1+3c_1+c_2\right)x^2+\left(4d_2d_1\sqrt{2}+\right.$$
$$\left. 4c_1^2c_2-6c_1^2+4c_2c_1-4c_2^2+2c_2\right)x+2c_1^2c_2-3c_1^2+2c_2c_1-2c_2^2+c_2$$
where the parameters satisfy the relations
$$d_1^2=c_1(c_1-1)(c_1-\tfrac{1}{2}),\;\; d_2^2=c_2(c_2-1)(c_2-\tfrac{3}{4}).$$
This gives generically two poles, except when the dominant coefficient vanishes for which it has a single pole, with the condition on $c_1,c_2$
$$2c_1^2c_2-4c_1c_2^2+4c_1c_2-3c_1+c_2=0.$$

\subsection{Integration Algorithm}

We can now combine everything. Hermite reduction removes multiple poles. The algorithm \underline{\textit{EllipticDivisors}} allows to represent any divisor with a single pole using third kind elliptic integrals due to the Abel Jacobi map being surjective. The last part without any poles forms a $g$ dimensional vector space of holomorphic forms on the hyperelliptic curve, which can be decomposed using first kind elliptic integrals.\\

\noindent
\underline{\textit{HyperellipticToElliptic}} \\
\textsf{Input:} A hyperelliptic integral $I=\int P/(Q\sqrt{S}) dx$ with completely decomposable Jacobian, and a complete list $L$ of independent elliptic morphisms.\\
\textsf{Output:} An expression of the integral using algebraic logs, and elliptic integrals.\\
\begin{enumerate}
\item Apply \underline{\textit{HermiteReduction}}, and note $\tilde{P}/(\tilde{Q}\sqrt{S})$ the resulting integrand, and $H$ the Hermite part.
\item Compute a partial fraction decomposition
$$\frac{\tilde{P}}{\tilde{Q}}= \sum\limits_{i=1}^\ell \sum\limits_{P_i(\alpha)=0} \frac{T_i(\alpha)}{x-\alpha}$$
\item Note $R,J=\underline{\textit{EllipticDivisors}}(S,L)$
\item For each $i=1\dots \ell$, find a solution in $c_k,d_k$ and $a_j$, with $d_k\neq 0$, of the equality as a function in $\mathbb{C}(x,\sqrt{S(x)})$
$$a_0 J(x)+\sum_{j=1}^{\sharp L} a_j\frac{L_{j,2}'}{L_{j,3}}=\frac{T_i(\alpha)\sqrt{S(\alpha)}}{(x-\alpha)\sqrt{S(x)}} $$
where $\alpha$ is a root of $P_i$, and substitute, giving an expression
$$\tilde{R}_i(x,\sqrt{S(x)},\alpha)=\sum\limits_{j=1}^{\sharp L} a_j(\alpha) \mathcal{F}(F_j(x) \mid \kappa_j)+$$
$$ a_0(\alpha)\left(\ln \left(\frac{R(x,\sqrt{S(x)},\alpha)}{R(x,-\sqrt{S(x)},\alpha)}\right)-\sum\limits_{k=1}^{\sharp L} \Pi(F_k,c_k(\alpha)\mid \kappa_i)\right).$$
\item Return 
$$H(x)+\sum\limits_{i=1}^\ell \sum\limits_{P_i(\alpha)=0} \frac{T_i(\alpha)}{\sqrt{S(\alpha)}} \tilde{R}_i(x,\sqrt{S(x)},\alpha)$$
\end{enumerate}

\begin{proof}[Proof of Theorem \ref{thm3}] 
Step $1$ applies Hermite reduction, and we know that in the completly decomposable case, it will succeed. Step $2$ compute a classical partial decomposition splitting all the poles. Step $3$ computes $\underline{\textit{EllipticDivisors}}$. Step $4$ tries to find parameters $c_i,d_i$ such that the set of poles of $J$ is only one pole at $\alpha$, and adjust the numerators terms using the derivatives of first kind elliptic integrals $\mathcal{F}(F_j(x) \mid \kappa_j)$. We know thanks to Proposition \ref{prop1} that any divisor in the Jacobian can be reached by a sum of third kind elliptic integrals, thus for a suitable choice of the $(c_k,d_k)$ with $d_k\neq 0$ and $a_j$, we can remove the pole in $T_i(\alpha)\sqrt{S(\alpha)}/((x-\alpha)\sqrt{S(x)})$. Now left is an hyperelliptic integral of the first kind. There is a vector space of dimension $g$ of these, and if $L$ is complete, there are $g$ elliptic morphisms. The fact that they are independent ensures that the polynomials $L_{j,2}'/L_{j,3}$ are independent and thus spans this whole $g$ dimensional space. Thus step $4$ always finds a solution, and notes $\tilde{R}_i$ the corresponding integral expression for it. Step $5$ returns the sum of the Hermite part, and the linear combination of the $\tilde{R}_i$.
\end{proof}

\noindent
\textbf{Example}: We obtain the following expression for the hyperelliptic integral
$$\int \frac{dx}{(6x-17)^2\sqrt{4x^5-10x^4-4x^3+9x^2+6x+1}}=$$
$$\frac{-216x^4+648x^3-108x^2-432x-108}{\sqrt{4x^5-10x^4-4x^3+9x^2+6x+1}(49686x-140777)} $$
$$+\frac{207\sqrt{2}}{331240}\mathcal{E}\left(\left.\frac{x^2}{2x+1}\right| \frac{1}{2} \right)-\frac{153}{165620}\mathcal{E}\left(\left.\frac{(x+1)^2}{4x+2}\right| \frac{3}{4} \right)$$
$$-\frac{330243\sqrt{30}}{301428400}\left(\Pi\left(\left.\frac{x^2}{2x+1}, 3\right| \frac{1}{2} \right)   +\Pi\left(\left.\frac{(x+1)^2}{4x+2}, \frac{9}{4}\right| \frac{3}{4} \right)-\right.$$
$$\left.\ln\left(\frac{(4x^2-11x-6)\sqrt{30}+3\sqrt{4x^5-10x^4-4x^3+9x^2+6x+1}}{(4x^2-11x-6)\sqrt{30}-3\sqrt{4x^5-10x^4-4x^3+9x^2+6x+1}} \right) \right)$$
$$-\frac{156581\sqrt{2}}{30142840}\mathcal{F}\left(\left.\frac{x^2}{2x+1}\right| \frac{1}{2} \right)-\frac{454067}{30142840}\mathcal{F}\left(\left.\frac{(x+1)^2}{4x+2}\right| \frac{3}{4} \right)$$

We can see the three parts of the integral. The two first lines are the Hermite reduction, the two next ones is the representation of the divisor using two elliptic integral of the third kind reduced modulo a principal divisor giving the log term. The two last lines is the final expression of the resulting hyperelliptic integral of the first kind, which can be written as a linear combination of elliptic integrals of the first kind.

Remark that the output is not unique. Indeed, in step $4$, the system is linear in the $a_j$ but not in the $c_k,d_k$. And typically the solution is not unique, because the application \underline{\textit{EllipticDivisors}} is surjective but not injective.

\section{Conclusion}
An implementation of these algorithms made on Maple is available at \url{http://combot.perso.math.cnrs.fr/software.html}. The output result is neither optimal, neither complete. It is not optimal in the sense that the number of elliptic integrals is not minimal. For this, we would need to build a $\mathbb{Q}$-basis of the residues, and reduces the corresponding divisors in the Jacobian, and then identify them to elliptic integrals. The positive is that the splitting field of the residues is not needed, the negative is that we cannot use the expression to test, for example, if the expression is elementary. Indeed, elliptic integral of the third kind can be elementary if the second variable is a torsion point, however, compensation could occur between several poles, and this would not be detected.

It is not complete as if the Jacobian is not completely decomposable, a divisor could still possibly be written as a sum of a divisor in the image of \underline{\textit{EllipticDivisors}} and a divisor of torsion. In fact the quotient of the Jacobian by this variety produces an Abelian variety, and testing torsion in it would be necessary to detect such cases.

\bibliographystyle{abbrv}

\begin{small}
\bibliography{super}
\end{small}

\end{document}